\newcommand{\calL}{{\mathcal L}}
\newcommand{\pp}{{\mathfrak p}}
\newcommand{\dd}{{\mathfrak d}}
\newcommand{\nn}{{\mathfrak n}}
\newcommand{\DD}{{\mathfrak D}}
\newcommand{\EE}{{\mathfrak E}}
\newcommand{\LL}{{\mathfrak L}}
\newcommand{\NN}{{\mathfrak N}}
\newcommand{\PP}{{\mathfrak P}}
\newcommand{\Z}{\mathbb{Z}}
\newcommand{\Q}{\mathbb{Q}}
\newcommand{\C}{\mathbb{C}}
\newcommand{\ord}{{\rm ord}}
\newcommand{\HH}{{\rm H}}
\newcommand{\LCM}{{\rm lcm}}
\newcommand{\ch}{{\rm char}}
\newtheorem{theorem}{Theorem}[section]
\newtheorem{proposition}[theorem]{Proposition}
\newtheorem{lemma}[theorem]{Lemma}
\newtheorem{corollary}[theorem]{Corollary}
\newtheorem{remark}[theorem]{Remark}
\newtheorem{notation}[theorem]{Notation}
\newtheorem{NotationAssumption}[theorem]{Notation and Assumptions}
\newtheorem{assumption}[theorem]{Assumption}
\begin{document}

\title{The analogue of B\"uchi's problem for function fields}

\author{Alexandra Shlapentokh\\
and\\
Xavier Vidaux}
\date{\today}

\begin{abstract}
B\"uchi's $n$ Squares Problem  asks for an integer $M$ such that any sequence $(x_0,\dots,x_{M-1})$, whose second difference of squares is the constant sequence $(2)$ (i.e. $x^2_n-2x^2_{n-1}+x_{n-2}^2=2$ for all $n$), satisfies $x_n^2=(x+n)^2$ for some integer $x$.  Hensley's problem for $r$-th powers (where $r$ is an integer $\geq2$) is a generalization of B\"{u}chi's problem asking for an integer $M$ such that, given integers $\nu$ and $a$, the quantity $(\nu+n)^r-a$ cannot be an $r$-th power for $M$ or more values of the integer $n$, unless $a=0$. The analogues of these problems for rings of functions consider only sequences with at least one non-constant term.

Let $K$ be a function field of a curve of genus $g$. We prove that Hensley's problem for $r$-th powers has a positive answer for any $r$ if $K$ has characteristic zero, improving results by Pasten and Vojta. In positive characteristic $p$ we obtain a weaker result, but which is enough to prove that B\"uchi's problem has a positive answer if $p\geq 312g+169$ (improving results by Pheidas and the second author).
\end{abstract}

\maketitle

AMS Subject Classification\,: 03B25, 11D41, 11U05

\section{Introduction}

A  1990 paper by L. Lipshitz \cite{Lipshitz} containing a description of a question posed in the 70's by J. R. B\"uchi  inspired a new interest in what is known today as ``B\"uchi's Problem'' or ``the $n$ Squares Problem''\, (denoted by $\mathbf{B^2}(\Z)$ in the future)\,:\\

 \noindent\emph{Does there exist a positive integer $M$ such that any sequence of $M$ integer squares, whose second difference is constant and equal to $2$, is of the form $(x+n)^2$, $n=1,\dots,M$, for some integer $x$?}\\

B\"uchi asked this question because a positive answer to it would imply a stronger form of the negative answer to Hilbert's Tenth Problem solved in 1970 by Yuri Matiyasevich using results of Martin Davis, Hilary Putnam and Julia Robinson. In logical terms, Matiyasevich's result (see \cite{Matiyasevic} and \cite{Davis}) implies that the positive existential theory of $\Z$ in the language $\calL=\{0,1,+,\cdot\}$ of rings is undecidable. B\"uchi observed that a positive answer to his problem would allow him to define existentially the multiplication over $\Z$ in the language $\calL^2=\{0,1,+,P_2\}$, where $P_2$ is a unary predicate for ``$x$ is a square'', hence proving that the positive existential theory of $\Z$ in the language $\calL^2$ is undecidable.

It makes sense to ask B\"uchi's question over other rings. If $R$ is a commutative ring with identity, the problem $\mathbf{B^2}(R)$ becomes\,:\\

 \noindent\emph{Does there exist a positive integer $M$ such that any sequence of $M$ squares in $R$, whose second difference is constant and equal to $2$, is of the form $(x+n)^2$, $n=0,\dots,M-1$, for some $x\in R$?}\\

A positive (or ``almost positive'') answer to $\mathbf{B^2}(R)$ \emph{in general} has similar logical consequences to a positive answer to $\mathbf{B^2}(\Z)$ if the existential ring theory of $R$ is undecidable.

B\"uchi's Problem is still open. However, in 2001, Vojta proved in \cite{Vojta2} that $\mathbf{B^2}(\Q)$, and hence also $\mathbf{B^2}(\Z)$, have a positive answer for some $M\geq8$ if the following (open) question of Bombieri has a positive answer for surfaces\,:\\

\noindent\emph{Let $X$ be a smooth projective algebraic variety of general type, defined over a number field $k$. Does there exist a proper Zariski-closed subset $Z$ of $X$ such that $X(k)\subseteq Z$?}\\

Vojta's proof actually is valid for any number field as was first noted by Yamagishi in \cite{Yamagishi}. Continuing this line  of investigation, in 2009, Pasten in \cite{Pasten2} produced the following generalization of Vojta's result\,:\\

\noindent\emph{If Bombieri's Question has a positive answer, then there exists an absolute constant $N$ (that can be chosen to be $8$ if Bombieri's question is true for any surface) such that, for each number field $K/\Q$ and each set $\{a_1,\ldots,a_N\}$ of $N$ elements in $K$, there is only a finite number of polynomials $f=x^2+ax+b\in K[x]$ not of the form $f=(x+c)^2$, satisfiying that $f(a_i)$ are squares in $K$ for each $i$.}\\

At the same time, R. G. E. Pinch in \cite{Pinch} proved that `many' non-trivial B\"uchi sequences of length $4$ could not be extended to B\"uchi sequences of length $5$ (originally B\"uchi asked his question for $M=5$).

Before turning our attention to rings of functions, we should note that a number of people (Allison \cite{Allison} in 1986, Bremner \cite{Bremner} in 2003, and Browkin and Brzezinski \cite{BrowkinBrzezinski} in 2006) have been studying the following analogue of B\"uchi's problem\,:\\

\noindent \emph{Does there exist an integer $M$ such that the system of equations
$$
x_{n+2}^2-2x_{n+1}^2+x_n^2=\ell,\quad n=0,\dots,M-3,
$$
where $\ell\in\Z$, has only solutions whose squares are the squares of an arithmetic progression?}\\

Observe that this problem is related to the original B\"uchi's problem over an integral extension of $\Z$\,: multiply the equations by $2\ell^{-1}$ and consider the change of variables
$$
y_n=\frac{\sqrt2}{\sqrt\ell}x_n.
$$

In \cite{Vojta2} Vojta also considered analogues of B\"uchi's Problem over rings of functions. If $R_t$ is a ring of functions in the variable $t$, the problem $\mathbf{B^2}(R_t)$ becomes\,:\\

\noindent\emph{Does there exist a positive integer $M$ such that any sequence of $M$ squares in $R_t$, not all constant, whose second difference is constant and equal to 2, is of the form $(x+n)^2$, $n=0,\dots,M-1$, for some $x\in R_t$?}\\

Vojta proved that $\mathbf{B^2}(R_t)$ had a positive answer when $R_t$ was the field of meromorphic functions over $\C$, or a function field of characteristic zero. In \cite{PheidasVidaux2} and \cite{PheidasVidaux2bis}, T. Pheidas and the second author used a different method to show that $\mathbf{B^2}(F(t))$ had a negative answer when $F$ had characteristic zero.  The new method was also extendible to  the case of $F$ of positive characteristic.  It turned out that if $F$ had positive characteristic,  $\mathbf{B^2}(F(t))$ had a \emph{negative answer} but one could still derive all the desired logical consequences.

In 1981, D. Hensley (in \cite{Hensley} and \cite{Hensley2}) proved that $\mathbf{B^2}(\mathbb F_p)$ had a positive answer, with $M=p$. This was the first (though as explained above not the last) positive answer to an analogue of B\"uchi's Problem. In the same work, he noticed that a positive answer to $\mathbf{B^2}(\Z)$ is implied by a positive answer to what we now call  Hensley's Problem denoted in the future by $\mathbf{HP^2}(\Z)$ \,:\\

\noindent\emph{Does there exist a positive integer $M$ such that, given any integers $\nu$ and $a$, if the quantity $(\nu+n)^2-a$ is a square for more than $M$ values of $n$ then $a=0$?}\\

\begin{remark}\label{note} This implication is not hard to see.  Indeed, suppose that a sequence $(x_n)$ of integers satisfies
\begin{equation}
\label{Xavier}
x_n^2-2x_{n-1}^2+x_{n-2}^2=2
\end{equation}
for $n=2,\dots,M-1$, namely, the sequence $(x_n^2)$ has constant second difference equal to $2$. In \cite{PheidasVidaux3} it was noted that the quantity $\frac{x_n^2-x_0^2}{n}-n$ does not depend on $n$. Denoting this quantity by $2\nu$, we can now rewrite \eqref{Xavier} as  $x_n^2-x_0^2=2n\nu+n^2$. Therefore we now have
$$
x_n^2-(\nu+n)^2=x_n^2-\nu^2-2n\nu-n^2=x_n^2-\nu^2-(x_n^2-x_0^2)=-\nu^2+x_0^2
$$
which does not depend on $n$. Writing $a=\nu^2-x_0^2$, we obtain $x_n^2=(\nu+n)^2-a$. Hence if $\mathbf{HP^2}(\Z)$ has a positive answer for some $M$, then $a$ must be zero and $\mathbf{B^2}(\Z)$ has a positive answer with the same $M$.
\end{remark}

We might consider the obvious analogues of Hensley's Problem over other rings (over a ring of functions we will ask some $x_n$ to be non-constant). For a general discussion on the equivalence between $\mathbf{B^2}(R)$ and $\mathbf{HP^2}(R)$ (for some rings $R$ the two problems \emph{may not} be equivalent) see the survey \cite{PastenPheidasVidaux}, or \cite{Pasten}.

In \cite{PheidasVidaux1}, T. Pheidas together with the second author proposed a generalization of B\"uchi's Problem to higher powers for a ring $R$, denoted in the future by $\mathbf{B^r}(R)$\,:\\

\noindent\emph{Does there exist a positive integer $M$ such that any sequence of $M$ $r$-th powers in $R$ (not all constant if $R=R_t$ is a ring of functions), whose second difference is constant and equal to $r!$, is of the form $(x+n)^r$, $n=0,\dots,M-1$, for some $x\in R$?}\\

It is easy to see that there is a \emph{Hensley Formulation} of this problem which we denote by $\mathbf{HF^r}(R)$.  More precisely, $\mathbf{B^r}(R)$ is equivalent (over \emph{many} rings) to the following question\,:\\

\noindent\emph{Does there exist a positive integer $M$ such that,  for all  $\nu$, $a_0$, \dots, $a_{r-2}$ in $R$, if the quantity
$$
(\nu+n)^r+a_{r-2}n^{r-2}+\dots+a_{1}n+a_0
$$
is an $r$-th power $x_n^r$ for more than $M$ values of $n$ then $a_0=\dots=a_{r-2}=0$?}\\

Again, if $R$ is a ring of functions, we ask for some $x_n$ to be non-constant. In \cite{PheidasVidaux3}, Pheidas and the second author proved that $\mathbf{HF^3}(F[t])$, hence also $\mathbf{B^3}(F[t])$, had a positive answer with $M=92$, if the field $F$ has characteristic zero.

In \cite{Pasten}, Pasten considered the following problem, called now Hensley's Problem for $r$-th powers and denoted by $\mathbf{HP^r}(R)$\,:\\

\noindent\emph{Does there exist a positive integer $M$ such that, for all  $\nu$ and $a$  in $R$, if the quantity
$$
(\nu+n)^r-a
$$
is an $r$-th power $x_n^r$ for more than $M$ values of $n$ then $a=0$?}\\

As usual, if $R$ is a ring of functions, we ask for some $x_n$ not to be constant. Pasten proved that $\mathbf{HP^r}(F[t])$ had a positive answer if $F$ had characteristic zero, for any $r\geq2$. This result was a new evidence that $\mathbf{B^r}(F[t])$  had a positive answer for any power $r$.

Let $K$ be a function field. In this paper we prove that $\mathbf{HP^r}(K)$ has a positive answer for any $r$ if $K$ has characteristic zero (see Theorem \ref{main} below). This implies in particular that $\mathbf{B^2}(K)$ has a positive answer. We also prove that an analogue of $\mathbf{B^2}(K)$ has a positive answer if $K$ has (large enough) positive characteristic (see Theorem \ref{main2} below) while obtaining all the desired logical consequences as in the case of the rational function fields of positive characteristic.  More specifically we show that while there are non-trivial solutions to B\"{u}chi's equations for large enough $M$, they are of a specific form (these non-trivial solutions were discovered by Pasten, see \cite{PheidasVidaux2bis}).

For both results, the number $M$ depends only on $r$ and the genus of $K$. Note that the dependence on the genus is to be expected\,: if $M$  did not depend on the genus then we could add to $K$ ``enough'' $r$-th powers (while increasing the genus) in order for $(\nu+n)^r-a$ to be an $r$-th power for a few more values of $n$.

In order to state the main theorems we introduce the following notation.

\begin{notation}
\begin{enumerate}
\item Let $K$ be a function field of genus $g$ over a field of constants $F$ and let $F_0$ be the prime field of $K$.
\item Let $r\geq 2$ and $M\geq1$ be natural numbers.
\item\label{Mitia} If $\bar c=(c_0,\dots,c_{M-1})$ is a sequence of distinct elements of $F$ and $\xi$ is a primitive $r$-th root of unity, we write
$$
c_{i,j,n}=\frac{c_i-\xi^nc_j}{1-\xi^n}
$$
for any indices $i$ and $j$ and for any $n\in\{1,\dots,r-1\}$.
\item Given $\bar c$ as above, let $\ell(\bar c)$ be equal to $3$ if either $\ch(F)$ does not divide $r$ and for all indices $i,j,k,m,n$ we have $c_{i,j,n} \not =c_{i,k,m}$, or for all indices $i,j,k$ we have
    $$
    [F_0(c_i,c_j,c_k,\xi)\colon F_0(c_i,c_j,c_k)]=r-1
    $$
    (in particular, the latter happens if $\ch(F)=0$ and $c_i$ are rational numbers). Otherwise set $\ell(\bar c)=r+1$.
\item Let
$$
B(r,\ell)=\beta_0(r,\ell)g + \beta_1(r, \ell)
$$
and
$$
\beta_0=\left (8r +4 +\frac{3r}{r-1}\right )r^2\ell, \qquad\textrm{and}\qquad\beta_1=\left (4r +2 + \frac{2r}{r-1}\right )r^2\ell + 1.
$$
\end{enumerate}
\end{notation}

\begin{theorem}\label{main}
Let $K$ be a function field of genus $g$ over a field of constants $F$ of characteristic $0$. Let $a,\nu\in K$ and $(x_0,\dots, x_{M-1})$ be a sequence of elements of $K$ such that at least one $x_i$ is not in $F$. Let $\bar c=(c_0,\dots,c_{M-1})$ be a sequence of distinct elements of $F$. If $M\geq B(r,\ell)$ and the sequence satisfies
\begin{equation}\label{Gauss}
x_{n}^r=(\nu+c_n)^r-a,\quad n=0,\dots,M-1
\end{equation}
then $a=0$.
\end{theorem}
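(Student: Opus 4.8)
The plan is to use the factorization of $X^r - Y^r$ over $F(\xi)$ together with a height/Mason--Stothers-type inequality (the $abc$-theorem for function fields, or equivalently the Riemann--Hurwitz/truncated second main theorem over function fields) applied to the curve cut out by the system \eqref{Gauss}. Assume for contradiction that $a \neq 0$. First I would rewrite each equation $x_n^r = (\nu + c_n)^r - a$ after the substitution $u = \nu + c_n$ (so the $u_n = \nu + c_n$ differ by the constants $c_n - c_0 \in F$) as the statement that $(\nu+c_n)^r - x_n^r = a$ is a fixed nonzero constant. Factoring the left side over $F(\xi)$ gives
$$
\prod_{m=0}^{r-1} \bigl( (\nu+c_n) - \xi^m x_n \bigr) = a,
$$
so each factor $(\nu+c_n) - \xi^m x_n$ is a function with divisor supported only on the poles/zeros coming from $\nu$, $a$ and a bounded set; the key point is that its zeros and poles are tightly constrained.

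The heart of the argument is a counting/intersection estimate. For two distinct indices $i \neq j$ and $m, n \in \{1,\dots,r-1\}$, the quantities $c_{i,j,n} = \frac{c_i - \xi^n c_j}{1-\xi^n}$ defined in the Notation are exactly the values for which certain linear combinations of the factors degenerate; the hypothesis via $\ell(\bar c)$ controls how many coincidences $c_{i,j,n} = c_{i,k,m}$ can occur, and hence controls the genus/ramification contribution. I would bound, for each pair $(i,j)$, the height of $\nu$ (equivalently the degree of the relevant divisor) in terms of $g$ and $r$ by feeding the three (or $r+1$) functions $(\nu+c_i) - \xi^m x_i$, $(\nu+c_j) - \xi^m x_j$, and their differences into the function-field $abc$-inequality: the number of distinct zeros of an $S$-unit-type relation is at most (roughly) $2g - 2 + |S|$, and $|S|$ is controlled by the poles of $\nu$ and the $x_n$. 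Summing these local estimates over enough indices $n$ forces $M \le B(r,\ell) = \beta_0(r,\ell) g + \beta_1(r,\ell)$ unless the system degenerates, i.e. unless $a = 0$. Running the contrapositive: if $M \ge B(r,\ell)$ then $a = 0$.

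The main obstacle I anticipate is the bookkeeping in the height estimate: one must show that the ``bad'' set $S$ of places (where $\nu$, $a$, or some $x_n$ has a pole, or where two of the linear factors share a zero) has size bounded linearly in $g$ and $r$ \emph{independently of $M$}, and that the coefficients $8r + 4 + \frac{3r}{r-1}$ and $4r + 2 + \frac{2r}{r-1}$ come out correctly. This requires carefully distinguishing the two cases in the definition of $\ell(\bar c)$: when $\ch(F) \nmid r$ and the $c_{i,j,n}$ are pairwise distinct one gets the sharper constant $\ell = 3$ (only three indices are needed to pin down $\nu$), whereas in the degenerate case one must use $\ell = r+1$ indices to extract the same information, which is why the bound weakens. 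A secondary subtlety is ensuring that the hypothesis ``at least one $x_i \notin F$'' genuinely forces non-constancy of $\nu$ (or of the relevant functions) so that the $abc$-inequality is not applied vacuously to constants; this is where one uses that a nontrivial solution cannot have all $x_n$ and $\nu$ constant once $a \neq 0$ and $M$ is large.
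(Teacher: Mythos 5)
Your overall strategy (factor $(\nu+c_n)^r - x_n^r$ over $F(\xi)$ and run a Mason--Stothers/height count) is not the paper's route, and as sketched it has gaps I do not see how to close. The paper instead differentiates: from Pasten's identity $(rx_n'x_n^{r-1}+a')^r = r^r\nu'^r(x_n^r+a)^{r-1}$ one extracts the $n$-independent quantity $\Delta = a'^r - r^r\nu'^r a^{r-1}$ and shows that, after clearing poles with a fixed auxiliary function $z$, the element $z^{r^2}\Delta$ is divisible by $\nn(x_n)$ for \emph{every} $n$. Since any $\ell(\bar c)$ of the $x_n$ are coprime, $z^{r^2}\Delta$ is divisible by a divisor of degree at least $d/\ell$ where $d=\sum\deg\dd(x_n)$, while its pole degree is $O(d/(M-1))$; this is precisely where $M$ enters and forces $\Delta=0$ once $M> B(r,\ell)$. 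From $\Delta=0$ one only gets $a=(\xi_0\nu+\gamma)^r$ with $\gamma\in F$, and Theorem \ref{main} is then finished by observing that everything becomes polynomial in the non-constant element $\nu$ and invoking Pasten's theorem on Hensley's problem for polynomial rings in characteristic zero.

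Your sketch is missing analogues of both halves. First, you never identify a single quantity, independent of $n$, that accumulates divisibility by the $\nn(x_n)$ as $n$ ranges over $M$ indices; ``summing local estimates over enough indices'' does not by itself make $M$ appear in the inequality, since for a fixed pair $(i,j)$ the abc-inequality gives a bound not involving $M$, and $H(\nu)$ cannot be bounded in terms of $g$ and $r$ alone (trivial solutions have $\nu$ of arbitrary height). Relatedly, the set $S$ of bad places contains the poles of $\nu$ and the zeros of $a$, whose cardinality grows with the heights of those elements, so the claim that $|S|$ is bounded linearly in $g$ and $r$ independently of $M$ --- which you yourself flag as the main obstacle --- is false as stated; the correct bookkeeping must compare degrees of divisors rather than count places, and must exploit the $\ell$-wise coprimality of the $x_n$ (Lemma \ref{Paul}), which is the actual role of $\ell(\bar c)$ (it is not a ramification bound). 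Second, even a successful height estimate of this kind cannot conclude $a=0$ directly: the solutions with $a=(\xi_0\nu+\gamma)^r$ pass every divisibility and height test, and ruling them out in characteristic zero genuinely requires the reduction to the polynomial case and Pasten's result (in positive characteristic they survive and produce the exceptional solutions of Theorem \ref{main2}). Your proposal offers no substitute for this endgame.
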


\begin{theorem}\label{main2}
Let $K$ be a function field of genus $g$ over a field of constants $F$ of characteristic $p\geq B(2,3)$. Let $a,\nu\in K$ and $(x_0,\dots, x_{M-1})$ be a sequence of elements of $K$ such that at least one $x_i$ is not in $F$. If $M\geq B$, then the sequence satisfies
\begin{equation}\label{Gausss}
x_{n}^2=(\nu+n)^2-a,\, n=0,\dots,M-1
\end{equation}
if and only if, either $a=0$, or there exists a non-negative integer $s$ and $f\in K$ such that for all $n$ we have
\begin{equation}\label{Pasten}
    x_n=(f+n)^{\frac{p^s+1}{2}}.
\end{equation}
\end{theorem}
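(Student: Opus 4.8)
The ``$\Leftarrow$'' direction is a direct computation. If $a=0$, then \eqref{Gausss} becomes $x_n^2=(\nu+n)^2$, whose only solutions are $x_n=\pm(\nu+n)$; these are the case $s=0$, $f=\pm\nu$ of \eqref{Pasten}. If $x_n=(f+n)^{(p^s+1)/2}$ with $s\geq1$, then, since $p\geq B(2,3)\geq169$ is odd and $n\in F_0\subseteq F$ gives $n^{p^s}=n$,
\begin{equation}\label{easydir}
x_n^2=(f+n)^{p^s+1}=(f+n)^{p^s}(f+n)=(f^{p^s}+n)(f+n)=\Bigl(n+\tfrac{f+f^{p^s}}{2}\Bigr)^{2}-\Bigl(\tfrac{f-f^{p^s}}{2}\Bigr)^{2},
\end{equation}
so \eqref{Gausss} holds with $\nu=\tfrac12(f+f^{p^s})$ and $a=\tfrac14(f-f^{p^s})^2$. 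Hence only ``$\Rightarrow$'' requires an argument, and I record that, for $s\geq1$, \eqref{Pasten} is equivalent to: \emph{$a$ is a square $b^2$ in $K$ and $(\nu+b)^{p^s}=\nu-b$} (with $f=\nu+b$), whereas $s=0$ amounts to $a=0$.

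For ``$\Rightarrow$'', assume \eqref{Gausss}, $M\geq B=B(2,3)$, some $x_i\notin F$, and $a\neq0$; the goal is to produce $b$ and $s$ as above. First I would write each equation of \eqref{Gausss} as $a=\alpha_n\beta_n$ with $\alpha_n=\nu+n-x_n$ and $\beta_n=\nu+n+x_n$, so that $\alpha_n\beta_n=a$ is fixed while $\alpha_n+\beta_n=2(\nu+n)$ moves by integers; subtracting two of these and using $\beta_n=a/\alpha_n$ yields the family of $abc$-identities
\begin{equation}\label{abcid}
(\alpha_n-\alpha_m)(\alpha_n\alpha_m-a)=2(n-m)\,\alpha_n\alpha_m .
\end{equation}
(Equivalently, the points $(\nu+n,x_n)$ lie on the conic $X^2-Y^2=a$, which, having the rational point $(\nu,x_0)$, admits a tangent-line parametrization over $K$.) The plan is then to run the argument proving Theorem~\ref{main} in the case $r=2$, but now over a field of characteristic $p$: feeding a suitable subfamily of \eqref{abcid} into the underlying function-field height estimate --- a Mason--Stothers / truncated second main theorem inequality for several moving divisors, which in characteristic $p$ carries an inseparable error term --- and working with the derivation $d/dt$ for a separating element $t$, one should obtain the dichotomy: \emph{either} $M\leq\beta_0(2,3)\,g+\beta_1(2,3)-1<B$, contradicting $M\geq B$; \emph{or} all the Wronskians occurring in the estimate vanish.

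It then remains to exploit the second alternative. Vanishing of those Wronskians says the relevant functions are linearly dependent over the field of $(d/dt)$-constants, namely $K^p$; unwinding this through \eqref{abcid} and the conic should force the whole configuration to ``descend through Frobenius'': $a$ becomes a square $b^2$ with $b\in K$, and $\nu-b$ becomes a Frobenius power of $\nu+b$, i.e.\ $\nu-b=(\nu+b)^{q}$ for some power $q$ of $p$. Taking $q=p^s$ with $s$ the largest exponent for which this persists, and checking that the descent terminates --- so that the configuration at the bottom is genuinely inseparably non-degenerate, whence the estimate applied there (exactly as for Theorem~\ref{main}) leaves no collapse beyond this relation --- one gets $(\nu+b)^{p^s}=\nu-b$; setting $f=\nu+b$, this reads $x_n^2=(f^{p^s}+n)(f+n)=(f+n)^{p^s+1}$, which is \eqref{Pasten}, while $a=\tfrac14(f-f^{p^s})^2\neq0$ because $f\notin F$ (as some $x_i\notin F$), consistent with the standing assumption.

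The hard part will be this inseparable analysis. One must show that the vanishing of the relevant Wronskians for the \emph{quadratic} configuration produces precisely the relation $(\nu+b)^{p^s}=\nu-b$ --- i.e.\ that $\nu-b$ is exactly a $p$-power Frobenius image of $\nu+b$ over $F_0$, rather than merely satisfying some weaker $p$-divisibility --- that the induction on the inseparable degree closes with the very same threshold $B(2,3)$, and that unwinding the $p^s$-twist through the double cover $X^2-Y^2=a$ genuinely produces the exponent $(p^s+1)/2$ and not a bare $p^s$; this last point, where the square-root structure of the conic interacts with Frobenius, I expect to be the crux. Keeping the ramification and conductor contributions under control so that the threshold stays at $B(2,3)=312g+169$ is the remaining quantitative task; the hypothesis $p\geq B(2,3)$ is what keeps $0,1,\dots,M-1$ distinct modulo $p$ and the small integers occurring (notably $r!=2$) invertible.
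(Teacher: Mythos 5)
Your ``if'' direction is correct and matches the paper's computation, and your reformulation of \eqref{Pasten} (for $s\geq1$) as ``$a=b^2$ and $(\nu+b)^{p^s}=\nu-b$ with $f=\nu+b$'' is accurate. But the ``only if'' direction is a plan rather than a proof, and the gap sits exactly where you say you expect the crux to be. The first pass through the height/Wronskian dichotomy (the paper's intermediate Theorem \ref{Savka} with $r=2$) yields, in the degenerate branch, only that $a=(\nu-\gamma)^2$ for some $\gamma$ with $\gamma'=0$, i.e. $\gamma=f^{p^s}$ with $f\notin K^p$. Writing $b=\nu-\gamma$, this says $\nu-b=f^{p^s}\in K^{p^s}$, which is strictly weaker than the relation $(\nu+b)^{p^s}=\nu-b$, equivalently $\nu=\tfrac{1}{2}(f+f^{p^s})$, that you need to land on \eqref{Pasten}; nothing in your outline bridges this, and asserting that the vanishing of the Wronskians ``should force the configuration to descend through Frobenius'' is precisely the statement to be proved.

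The paper closes this gap with a second, renormalized application of the same intermediate theorem: from $x_n^2=(2\nu-f^{p^s}+n)(f+n)^{p^s}$ one sets $y_n=x_n(f+n)^{-(p^s-1)/2}$, checks that $y_n^2=(\bar\nu+n)^2-(\bar\nu-f)^2$ with $\bar\nu=\nu+\tfrac{1}{2}(f-f^{p^s})$, proves that at most one $y_n$ can be a $p$-th power (so that the intermediate theorem applies to the new sequence with the same bound $B(2,3)$), and derives a contradiction from the assumption $\bar\nu\ne f$; the conclusion $\bar\nu=f$ is exactly $\nu=\tfrac{1}{2}(f+f^{p^s})$, whence $x_n^2=(f+n)^{p^s+1}$ and the exponent $(p^s+1)/2$ appears for free. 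You would also need to make explicit the preliminary reduction your ``induction on the inseparable degree'' gestures at: the hypothesis is only that some $x_i\notin F$, while the derivation-based machinery needs some $x_i\notin K^p$, so one must first write $x_n=w_n^{p^h}$ with not all $w_n$ in $K^p$ and run the whole argument on the $w_n$. Without the renormalization step (or an equivalent substitute) the argument does not reach \eqref{Pasten}.
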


Let $\calL^2_\tau=\calL^2\cup\{\tau\}$ be the language obtained by adding to $\calL^2$ a symbol of unary function $\tau$ for multiplication by a transcendental element $t$ of $K$. Similarly, let $\calL_\tau=\calL\cup\{\tau\}$ be the language obtained by adding to $\calL$ the symbol $\tau$.

In this notation we obtain the following corollaries in Logic\,:

\begin{corollary}\label{cor}
If $K$ is a function field of genus $g$ over a field of constants $F$ of characteristic $0$ or $p\geq B(2,3)$, then multiplication over $K$ is positive-existential in the languages $\calL^2_\tau$.
\end{corollary}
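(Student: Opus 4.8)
The plan is to reduce the positive-existential definability of multiplication to that of the graph $G=\{(x,z)\in K^2:z=x^2\}$, to describe $G$ by a B\"uchi system (Remark~\ref{note} read backwards), and to exploit the symbol $\tau$ twice: to replace, when needed, an element $x$ by the non-constant element $x+t$, and --- in characteristic $p$ --- to kill Pasten's exceptional solutions. First, it suffices to define $G$: one has $2uv=(u+v)^2-u^2-v^2$, and the relation $w=2^{-1}y$ is positive-existential (it is equivalent to $y=w+w$), where $2\neq0$ in $K$ because $\ch K=0$ or $\ch K=p\geq B(2,3)\geq169$; so the graph of multiplication is positive-existential over $\{0,1,+\}$ together with $G$.

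Set $M=B(2,3)$ and consider the $\calL^2$-formula
$$
\phi(x,z):\quad \exists y_0,\dots,y_{M-1}\ \Bigl(\bigwedge_{n=0}^{M-1}P_2(y_n)\ \wedge\ y_0=z\ \wedge\ y_1=z+2x+1\ \wedge\ \bigwedge_{n=0}^{M-3}\bigl(y_{n+2}-2y_{n+1}+y_n=2\bigr)\Bigr).
$$
If $z=x^2$ then $y_n=(x+n)^2$ witnesses $\phi(x,z)$, so $z=x^2\Rightarrow\phi(x,z)$ unconditionally. Conversely, assume $\phi(x,z)$ with $x\notin F$ and write $y_n=x_n^2$; the constant-second-difference condition forces $y_n=n^2+2xn+z$, hence $x_n^2=(x+n)^2-a$ with $a=x^2-z$, and not all $x_n$ are constant (else all $n^2+2xn+z$ would be, forcing $x\in F$ since $2\neq0$). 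If $\ch K=0$, Theorem~\ref{main} with $r=2$ and $\bar c=(0,\dots,M-1)$ --- for which $\ell(\bar c)=3$, so $M=B(2,3)$ is in range --- gives $a=0$, i.e. $z=x^2$. Thus, in characteristic zero, $\phi$ already defines $G$ on $\{x\notin F\}$.

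In characteristic $p$, Theorem~\ref{main2} gives only $a=0$ (then $z=x^2$) or Pasten's form $x_n=(f+n)^{(p^s+1)/2}$, i.e. $z=f^{p^s+1}$ and $2x=f^{p^s}+f$; this again gives $z=x^2$ when $s=0$, but not when $s\geq1$, so $\phi$ over-defines. To remedy this I would twist by $\tau$: put
$$
\phi^{\ast}(x,z):\quad \phi(x,z)\ \wedge\ \phi\bigl(\tau(x),\tau(\tau(z))\bigr),
$$
which also imposes a B\"uchi system for $(tx,t^2z)$; genuine squares still satisfy $\phi^{\ast}$ (witness $(tx+n)^2$). If $\phi^{\ast}(x,z)$ holds with $x\notin F$ then $tx\notin F$, so Theorem~\ref{main2} applied to the twisted system gives $t^2z=(tx)^2$ (so $z=x^2$), or a Pasten form $t^2z=g^{p^{s'}+1}$, $2tx=g^{p^{s'}}+g$, which again gives $z=x^2$ if $s'=0$. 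The remaining case --- both components Pasten with $s,s'\geq1$ --- yields $t^2f^{p^s+1}=g^{p^{s'}+1}$ and $t(f^{p^s}+f)=g^{p^{s'}}+g$; eliminating $g^{p^{s'}}$ and solving the resulting quadratic forces $g\in\{tf,\,tf^{p^s}\}$, and substituting back produces a relation such as $t^{p^{s'}-1}=f^{\pm(p^s-p^{s'})}$ or $t^{p^{s'}-1}f^{p^{s+s'}-1}=1$ (or, if $s=s'$, simply $t^{p^s-1}=1$, forcing $t\in F$). Passing to divisors of zeros, each such relation forces $[K:F(t)]\geq p$; but choosing $t$ of minimal degree makes $[K:F(t)]$ at most the gonality of the curve, hence $O(g)<p$ since $p\geq 312g+169$ --- a contradiction. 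So $s=0$, $z=x^2$, and $\phi^{\ast}(x,z)\wedge x\notin F\Rightarrow z=x^2$ in all characteristics (with $\phi^{\ast}=\phi$ in characteristic zero). I expect this positive-characteristic step --- verifying that the $\tau$-twisted B\"uchi system together with Theorem~\ref{main2} really annihilates all of Pasten's solutions, uniformly in the data --- to be the main obstacle.

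Finally, since $t\notin F$, for every $x\in K$ at least one of $x,x+t$ is non-constant (otherwise $t=(x+t)-x$ would be). Using $x+t=x+\tau(1)$ and $2tx+t^2=\tau(x)+\tau(x)+\tau(\tau(1))$, put
$$
\Phi(x,z):\quad \phi^{\ast}(x,z)\ \wedge\ \exists w\,\Bigl(\phi^{\ast}\bigl(x+\tau(1),w\bigr)\ \wedge\ w=z+\tau(x)+\tau(x)+\tau(\tau(1))\Bigr),
$$
a positive-existential $\calL^2_\tau$-formula. If $z=x^2$ then $\phi^{\ast}(x,z)$ holds and $w=(x+t)^2$ witnesses the second conjunct. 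Conversely, if $\Phi(x,z)$ holds: when $x\notin F$ the first conjunct gives $z=x^2$; when $x\in F$ then $x+t\notin F$, so the second conjunct gives $w=(x+t)^2$, whence $z=w-2tx-t^2=x^2$. Thus $\Phi$ defines $G$ over all of $K$, and with the reduction of the first paragraph this shows multiplication over $K$ is positive-existential in $\calL^2_\tau$, proving Corollary~\ref{cor}.
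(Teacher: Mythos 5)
Your reduction to squaring, your formula $\phi$, and your use of Remark~\ref{note} to convert the B\"uchi system into the Hensley form $x_n^2=(x+n)^2-a$ all match the paper, which builds the same $\phi(z,w)$ (with $2z=w_1-w_0-1$) and feeds it into Theorems~\ref{main} and~\ref{main2}. Where you genuinely diverge is in how the two degenerate branches are killed. For the all-constant branch the paper twists multiplicatively in characteristic $0$ ($\psi(z,w)=\phi(z,w)\wedge\phi(tz,t^2w)$, using that $tz,t^2w\in F$ forces $z=w=0$), while you shift additively by $t$; both work. For Pasten's exceptional solutions in characteristic $p$ the paper follows \cite{PheidasVidaux2} and conjoins the additive shifts $\phi(z\pm t,\dots)$ and $\phi(z+t^2,\dots)$ (Lemmas~\ref{Fritz4}--\ref{Fritz5}), whereas you conjoin the multiplicative twist $\phi(tx,t^2z)$ and eliminate directly: your quadratic does factor as $(g-tf)(g-tf^{p^s})=0$, and in each branch the resulting monomial relation between $t$ and $f$ forces either $t\in F$ (when $s=s'$) or $\HH(t)=[K:F(t)]\geq p$, contradicting $[K:F(t)]\leq g+1<p$. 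So your route is viable and more self-contained than the paper's citation of \cite{PheidasVidaux2}. Two caveats you should make explicit. First, your elimination needs $[K:F(t)]<p$, so it only works when $\tau$ is multiplication by a $t$ of small degree (e.g.\ the one supplied by Lemma~\ref{Luba}); the paper's additive shifts are insensitive to the degree of $t$, which matters if $\tau$ is meant to denote multiplication by an arbitrary transcendental element. Second, before applying Theorem~\ref{main2} to the twisted system you must verify that $tx\notin F$ (equivalently, that not every twisted $y_n$ is constant); this does follow from the same height bound, since in the exceptional case $2x=f^{p^s}+f$ with $f\notin F$ and $s\geq 1$ gives $\HH(x)=p^s\HH(f)\geq p>[K:F(t)]$, so $x$ cannot equal $c/t$ --- but it is not automatic, and it is exactly the degenerate branch that Lemma~\ref{Fritz} of the paper is written to track.
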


\begin{corollary}\label{cor2}
If $K$ is a function field of genus $g$ over a field of constants $F$ of characteristic $0$ or $p\geq B(2,3)$, then the positive existential theory of $K$ in $\calL^2_\tau$ is undecidable if and only if the positive existential theory of $K$ in $\calL_\tau$ is undecidable.
\end{corollary}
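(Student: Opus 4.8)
The plan is to deduce Corollary \ref{cor2} from Corollary \ref{cor} by exhibiting effective, truth-preserving translations between positive existential sentences of $\calL_\tau$ and of $\calL^2_\tau$ in \emph{both} directions. Once multiplication is positive-existentially definable in $\calL^2_\tau$, the two languages define exactly the same positive existential subsets of $K$, so their positive existential theories are Turing-equivalent; the biconditional about undecidability is then just the contrapositive of the corresponding biconditional about decidability.

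First I would handle the easy direction. Since $P_2$ is interpreted over $K$ as ``being a square'', the atomic formula $P_2(x)$ is equivalent over $K$ to the positive existential $\calL_\tau$-formula $\exists y\,(y\cdot y=x)$, and every $\calL^2_\tau$-term is already an $\calL_\tau$-term. Hence, after putting a positive existential $\calL^2_\tau$-sentence in the standard form in which all atomic subformulas are either equations between terms or of the shape $P_2(\cdot)$, and replacing each occurrence of $P_2$ by its definition, one obtains an algorithmic map $\varphi\mapsto\varphi^{\flat}$ from positive existential $\calL^2_\tau$-sentences to positive existential $\calL_\tau$-sentences with $K\models\varphi\iff K\models\varphi^{\flat}$. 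Therefore decidability of the positive existential theory of $K$ in $\calL_\tau$ implies decidability of the positive existential theory of $K$ in $\calL^2_\tau$.

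For the converse I would invoke Corollary \ref{cor}: there is a positive existential $\calL^2_\tau$-formula $\mu(x,y,z)$ with $K\models\mu(a,b,c)$ if and only if $ab=c$. Given a positive existential $\calL_\tau$-sentence, first rewrite it effectively, introducing existentially quantified auxiliary variables, so that each atomic subformula is of the form $x+y=z$, $x\cdot y=z$, $\tau(x)=y$, $x=0$, $x=1$, or $x=y$; this preserves truth over $K$ and keeps the sentence positive existential. Then replace each subformula $x\cdot y=z$ by $\mu(x,y,z)$ and leave the others untouched, since they already belong to $\calL^2_\tau$. The outcome is a positive existential $\calL^2_\tau$-sentence with the same truth value over $K$, produced algorithmically; call it $\psi\mapsto\psi^{\sharp}$. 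Thus decidability of the positive existential theory of $K$ in $\calL^2_\tau$ implies decidability of the positive existential theory of $K$ in $\calL_\tau$. Combining the two implications gives the claimed equivalence of (un)decidability.

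I do not expect a genuine obstacle in this argument: all the mathematical content sits in Corollary \ref{cor}, hence ultimately in Theorems \ref{main} and \ref{main2}, and what remains is routine proof-theoretic bookkeeping. The only two points that need a little care are that the normal-form rewriting and the substitutions be carried out so as to stay within positive existential formulas (no negation or universal quantifier is introduced, precisely because $\mu$ and the definition of $P_2$ are themselves positive existential and positive existential formulas are closed under conjunction, disjunction, and existential quantification after renaming bound variables), and that all translations be effective (each defining formula is fixed, independent of the input sentence).
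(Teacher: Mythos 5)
Your proof is correct and is exactly the intended argument: the paper gives no separate proof of Corollary \ref{cor2}, treating it as the standard consequence of Corollary \ref{cor} via the two effective truth-preserving translations you describe (replacing $P_2(x)$ by $\exists y\,(y\cdot y=x)$ in one direction, and multiplication by the positive existential formula $\mu$ from Corollary \ref{cor} in the other).
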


There are many function fields for which the positive existential theory is known to be undecidable. For more information, we refer the interested reader to \cite{Ghent1999}, \cite{Poonen} and \cite{Shlapentokh8}.


\section{Technical preliminaries}

\setcounter{equation}{0}

\begin{NotationAssumption}\label{Natasha}
Below we will use the following notation and assumptions.
\begin{enumerate}
\item Let $K$ be a function field of genus $g$ over a field of constants $F$ and let $F_0$ be the prime field of $K$.
\item A \emph{prime} of $K$ is a valuation of $K$.
\item Let $\xi$ be a primitive $r$-th root of unity.
\item If $\mathfrak I$ is an effective (i.e. integral) divisor, we will denote by $\deg \mathfrak I$ the degree of $\mathfrak I$.
\item If $\mathfrak I_1$ and $\mathfrak I_2$ are integral divisors, we write $\mathfrak I_1 | \mathfrak I_2$ ($\mathfrak I_1$ divides $\mathfrak I_2$) to mean that for all primes $\pp$ of $K$ we have that $\ord_{\pp}\mathfrak I_1 \leq \ord_{\pp}\mathfrak I_2$.  Similarly for any prime $\pp$ of $K$ we write that $\pp | \mathfrak I_1$ ($\pp$ divides $\mathfrak I_1$) to mean $\ord_{\pp}\mathfrak I_1 >0$.
\item For $x \in K$, let $\nn(x)$ denote the zero divisor of $x$ and $\dd(x)$ the
pole divisor of $x$. Let $\DD(x)=\frac{\nn(x)}{\dd(x)}$ be the divisor of $x$. Let $\HH(x)$
denote the height of $x$, i.e. $\deg \dd(x) = \deg \nn(x)$.
\item Let $\pp_{\infty}$ be a valuation of $K$.
\item Let $t\in K\setminus F$ having a pole at $\pp_\infty$ only (such a $t$ exists by  \cite[Fried and Jarden, Lemma 3.2.3, p. 55]{FJ}). We can also assume that $t$ is not a $p$-th power in the case $K$ has characteristic $p>0$ (by taking successive $p$-th roots if necessary).
\item For a prime $\pp$ of $K$, let $e(\pp)$ be the ramification degree of $\pp$ over $F(t)$.
\item We can define a global derivation with respect to $t$ as in Mason
 \cite[p. 9]{Mason}. Given an element $x$ of $K$, the derivative with respect to $t$ will be denoted in the usual fashion as $x'$ or $\frac{dx}{dt}$. Observe that usual differentiation rules apply to the global derivation with respect to $t$.  Thus, the only functions with the global derivative with respect to $t$ equal to zero are constants in the case the characteristic is equal to zero and $p$-th powers in the case the characteristic is equal to $p>0$.
\item If the field $F$ is algebraically closed and $\pp$ is a prime of $K$ we can also define a local derivation with respect to the prime $\pp$ as in Mason \cite[p. 9]{Mason}. The derivative of $x\in K$ with respect to $\pp$ will be denoted as $\frac{\partial x}{\partial \pp}$.
\item For all primes $\pp$, let
$$
d(\pp)=\ord_\pp\left(\frac{\partial t}{\partial \pp}\right)
$$
and let
$$
\EE=\prod_{d(\pp)>0}\pp^{d(\pp)}.
$$
\item\label{VectorSpace} If $\mathfrak A$ is a divisor of $K$, we will write
$$
L(\mathfrak A)=\{f\in K \mid \ord_{\pp}f\geq-\ord_\pp\mathfrak A\mbox{ for all primes }\pp \mbox{ of } K\}
$$
and $\ell(\mathfrak A)$ for the dimension of $L(\mathfrak A)$ over $F$.
\item Throughout the paper the following constants will be used\,:
$$
C_1=g+1,\qquad C_2=3g
$$
$$
C_3=C_2+2=3g+2 \qquad\textrm{and}\qquad C_4=C_2+C_1 + 1=4g+2.
$$
\end{enumerate}
\end{NotationAssumption}

\begin{assumption}
Without loss of generality, we may assume that $F$ is algebraically closed (therefore, all primes of $K$,  in particular $\pp_\infty$, have degree $1$).
\end{assumption}

The following lemma gathers some general formulae we need in this section.

\begin{lemma}\label{Formulae}
\begin{enumerate}
\item \label{FormulaOrd} Let $E$ be a finite degree subfield of $K$. Let $\PP$ be a prime of $E$ and let $\pp_1,\dots,\pp_n$ be the primes in $K$ above $\PP$. Let $e(\pp_i/\PP)$ be the ramification index of $\pp_i$ over $\PP$. Let $f(\pp_i/\PP)$ be the relative degree of $\pp_i$ over $\PP$ (the degree of the extension of the residue field). We have
$$
[K:E]=\sum_{i=1}^ne(\pp_i/\PP)f(\pp_i/\PP).
$$
\item\label{RiemannRoch} (Riemann-Roch) Let $\mathfrak A$ be a divisor of $K$ of degree $d$.
\begin{enumerate}
\item\label{RR1} If $g=0$ then $\ell(\mathfrak A)= d+1$;
\item\label{RR2} If $g>0$ and $0<d<2g-2$ then $\ell(\mathfrak A)\geq d-g+1$;
\item\label{RR3} If $g>0$ and $d=2g-2$ then $\ell(\mathfrak A)\geq g-1$;
\item\label{RR4} If $g>0$ and $d>2g-2$ then $\ell(\mathfrak A)= d-g+1$;
\end{enumerate}
\end{enumerate}
\end{lemma}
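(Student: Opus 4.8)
The plan is to obtain both parts from the standard structure theory of algebraic function fields rather than to reprove them in detail.

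For part \eqref{FormulaOrd}, the idea is the usual proof of the fundamental equality $\sum e_if_i=[K:E]$. Let $O_{\PP}\subseteq E$ be the valuation ring of $\PP$ and let $\calO$ be its integral closure in $K$. Then $\calO$ is a module-finite $O_{\PP}$-algebra, torsion-free over the discrete valuation ring $O_{\PP}$ and hence free of rank $[K:E]$, and it is a semilocal Dedekind domain whose maximal ideals $\mm_1,\dots,\mm_n$ are the contractions of $\pp_1,\dots,\pp_n$. Since $\PP\calO=\mm_1^{e(\pp_1/\PP)}\cdots\mm_n^{e(\pp_n/\PP)}$, the Chinese Remainder Theorem gives $\calO/\PP\calO\cong\prod_{i}\calO/\mm_i^{e(\pp_i/\PP)}$; the left-hand side has dimension $[K:E]$ over the residue field $O_{\PP}/\PP$ by freeness, while the $i$-th factor on the right has dimension $e(\pp_i/\PP)f(\pp_i/\PP)$ (because $\calO$ localized at $\mm_i$ is a discrete valuation ring, so each graded piece $\mm_i^{j}/\mm_i^{j+1}$ is one copy of the residue field at $\pp_i$). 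Comparing dimensions yields the formula. This is exactly the content of standard references such as \cite{FJ} or Stichtenoth's book.

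For part \eqref{RiemannRoch} I would start from the Riemann--Roch theorem in the form
$$
\ell(\mathfrak A)-\ell(\mathfrak W-\mathfrak A)=\deg\mathfrak A-g+1,
$$
with $\mathfrak W$ a canonical divisor and $\deg\mathfrak W=2g-2$, together with two elementary observations: a divisor $D$ with $\deg D<0$ has $\ell(D)=0$ (a nonzero $f\in L(D)$ would make $D+\DD(f)$ an effective divisor of negative degree), and a divisor $D$ with $\deg D=0$ has $\ell(D)\leq1$ (if $\ell(D)\geq1$, a nonzero $f_0\in L(D)$ makes $D+\DD(f_0)$ effective of degree $0$, hence $0$, so $D=\DD(f_0^{-1})$ is principal and $L(D)=F f_0$). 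The four cases then follow by inspecting $\deg(\mathfrak W-\mathfrak A)=2g-2-d$: for \eqref{RR1}, $g=0$ makes this $-2-d<0$, so $\ell(\mathfrak A)=d+1$; for \eqref{RR4}, $d>2g-2$ makes it negative, so $\ell(\mathfrak A)=d-g+1$; for \eqref{RR2}, $\ell(\mathfrak W-\mathfrak A)\geq0$ already gives $\ell(\mathfrak A)\geq d-g+1$; and for \eqref{RR3}, $d=2g-2$ makes $\deg(\mathfrak W-\mathfrak A)=0$, so $\ell(\mathfrak W-\mathfrak A)\leq1$ and $\ell(\mathfrak A)\geq d-g+1=g-1$.

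I do not expect a genuine obstacle in either part. The one point requiring a little care is that in positive characteristic $K/F(t)$---and hence a general $K/E$---need not be separable, so in part \eqref{FormulaOrd} I would make sure to cite a version of the fundamental equality valid for arbitrary finite extensions rather than one that presupposes separability (the integral-closure argument above is insensitive to this).
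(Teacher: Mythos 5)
Your proposal is correct, but it does considerably more work than the paper, which disposes of both items by citation alone: item (1) is referred to Fried--Jarden (Proposition 2.3.2 and Theorem 3.6.1 of \emph{Field Arithmetic}) and item (2) to Koch (Theorem 5.6.2). What you supply instead are the standard derivations: the fundamental identity via the integral closure $\calO$ of the valuation ring of $\PP$, its freeness of rank $[K:E]$, and the Chinese Remainder decomposition of $\calO/\PP\calO$; and the four Riemann--Roch cases from the symmetric form $\ell(\mathfrak A)-\ell(\mathfrak W-\mathfrak A)=d-g+1$ together with the two elementary facts about divisors of negative and zero degree. Both derivations are sound, and your remark about not presupposing separability of $K/E$ in positive characteristic is a genuine point in favour of the integral-closure route (the paper sidesteps it by citing a reference that covers arbitrary finite extensions). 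Two cosmetic remarks: in the case $d=2g-2$ the bound $\ell(\mathfrak A)\geq g-1$ already follows from $\ell(\mathfrak W-\mathfrak A)\geq 0$, so the observation that $\ell(\mathfrak W-\mathfrak A)\leq 1$ is not needed there; and in the case $g=0$ the conclusion $\ell(\mathfrak A)=d+1$ is only valid for $d\geq -1$ (for $d\leq -2$ one has $\ell(\mathfrak A)=0$), but this imprecision is in the lemma as stated, not in your argument, and is harmless since the paper only ever applies the lemma to divisors of degree $g+1>0$.
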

\begin{proof}
For \eqref{FormulaOrd} see Fried and Jarden \cite[Proposition 2.3.2, Theorem 3.6.1]{FJ}. For \eqref{RiemannRoch} see Koch \cite[Theorem 5.6.2]{Koch}.
\end{proof}

\begin{lemma}\label{RR}
If $\mathfrak A$ is a divisor of $K$ of degree $g+1$ then $\ell(\mathfrak A)\geq2$.
\end{lemma}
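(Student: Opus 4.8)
The plan is to deduce this directly from the Riemann--Roch statement recorded in Lemma~\ref{Formulae}\eqref{RiemannRoch}, applied to our divisor $\mathfrak A$ of degree $d=g+1$. Since each of the four clauses of that lemma is valid only in a prescribed range of degrees, the one substantive point is to decide which clause applies for each value of $g$, and this amounts to comparing $g+1$ with the Riemann--Roch threshold $2g-2$.

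First I would dispose of the small-genus cases, where $g+1$ sits at or above the threshold. If $g=0$ then $d=1$, and clause \eqref{RR1} gives $\ell(\mathfrak A)=d+1=2$. If $g=1$ then $2g-2=0<2=d$, so clause \eqref{RR4} applies and gives $\ell(\mathfrak A)=d-g+1=2$. If $g=2$ then $2g-2=2<3=d$, and again clause \eqref{RR4} gives $\ell(\mathfrak A)=d-g+1=2$. If $g=3$ then $d=4=2g-2$, so clause \eqref{RR3} gives $\ell(\mathfrak A)\geq g-1=2$.

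Finally, for $g\geq 4$ one checks $0<g+1<2g-2$, i.e. $0<d<2g-2$, so clause \eqref{RR2} applies and yields $\ell(\mathfrak A)\geq d-g+1=2$. In every case $\ell(\mathfrak A)\geq 2$, which is the assertion. The only point requiring a little care --- the ``hard part'', such as it is --- is that the degree hypotheses of the Riemann--Roch clauses become mutually exhaustive for $d=g+1$ only once one peels off the genera $g\leq 3$, for which $g+1$ fails to lie strictly below $2g-2$; no estimate beyond Lemma~\ref{Formulae} is needed.
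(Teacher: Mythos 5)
Your proof is correct and follows exactly the same route as the paper's: a case split on $g$ comparing $d=g+1$ with $2g-2$, invoking the corresponding clause of Lemma~\ref{Formulae}\eqref{RiemannRoch} in each range. Nothing to add.
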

\begin{proof}
Since $\mathfrak A$ has degree $d=g+1$, we have
\begin{itemize}
\item if $g=0$ then $d=1$ and $\ell(\mathfrak A)=d+1=2$ by Lemma \ref{Formulae} \eqref{RR1};
\item if $g=1$ or $2$ then $d>2g-2$ and $\ell(\mathfrak A)=d-g+1=2$ by Lemma \ref{Formulae} \eqref{RR4};
\item if $g=3$ then $d=4=2g-2$ and $\ell(\mathfrak A)\geq g-1=2$ by Lemma \ref{Formulae} \eqref{RR3};
\item if $g\geq4$ then $d=g+1<2g-2$ and $\ell(\mathfrak A)\geq d-g+1=2$ by Lemma \ref{Formulae} \eqref{RR2}.
\end{itemize}
Hence in all cases, $\ell(\mathfrak A)\geq2$.
\end{proof}

\begin{lemma}\label{Mason}
Let $x\in K$ and $\pp$ be a prime of $K$. We have
\begin{enumerate}
\item\label{Mason1} $\ord_\pp(\frac{\partial x}{\partial \pp})\geq\ord_\pp(x)-1$; and
\item\label{Mason2} if $\ord_\pp(x)\geq0$, then $\ord_\pp(\frac{\partial x}{\partial \pp})\geq0$.
\end{enumerate}
\end{lemma}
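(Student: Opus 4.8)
The plan is to pass to the completion of $K$ at $\pp$ and reduce both inequalities to a one-line computation with Laurent series. Since $F$ is algebraically closed, the residue field at $\pp$ is $F$ and the completion $\widehat{K}_\pp$ is isomorphic to the Laurent series field $F((\pi))$, where $\pi$ is a local uniformizer at $\pp$; moreover $\ord_\pp$ extends to $\widehat{K}_\pp$ as the $\pi$-adic valuation, and the local derivation $\frac{\partial}{\partial\pp}$ of Mason \cite[p.~9]{Mason} acts on $\widehat{K}_\pp$ as the formal derivative $\frac{d}{d\pi}$. (This is the normalization that makes $d(\pp)=\ord_\pp(\partial t/\partial\pp)$ and the divisor $\EE$ of Notation and Assumptions \ref{Natasha} play the role of the different.) I would begin by recording these facts, so that it suffices to work with the $\pi$-expansion of $x$.

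Next I would write $x=\sum_{k\geq m}a_k\pi^k$ with $a_k\in F$, $a_m\neq 0$, and $m=\ord_\pp(x)$, and differentiate term by term: $\frac{\partial x}{\partial\pp}=\sum_{k\geq m}ka_k\pi^{k-1}$. Each summand on the right either vanishes (which can happen when $\ch F=p$ divides $k$) or has $\pi$-order $k-1\geq m-1$; hence $\ord_\pp\bigl(\frac{\partial x}{\partial\pp}\bigr)\geq m-1=\ord_\pp(x)-1$, giving \eqref{Mason1}.

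For \eqref{Mason2} I would then split into cases according to $m=\ord_\pp(x)\geq 0$. If $m\geq 1$, part \eqref{Mason1} already yields $\ord_\pp\bigl(\frac{\partial x}{\partial\pp}\bigr)\geq m-1\geq 0$. If $m=0$, the only term of $\sum_{k\geq 0}ka_k\pi^{k-1}$ that could have negative $\pi$-order is the $k=0$ term, and it equals $0\cdot a_0\pi^{-1}=0$, while every other term has order $k-1\geq 0$; so $\ord_\pp\bigl(\frac{\partial x}{\partial\pp}\bigr)\geq 0$ again.

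I do not expect a genuine obstacle here; the statement is essentially the assertion that the valuation ring at $\pp$ is stable under the local derivation. The two points that need a little care are: (i) in characteristic $p$ the coefficient $ka_k$ may vanish even when $a_k\neq 0$, but this only increases $\ord_\pp\bigl(\frac{\partial x}{\partial\pp}\bigr)$, so the stated inequalities are unaffected; and (ii) one must invoke precisely the local derivation of \cite{Mason} under the identification $\widehat{K}_\pp\cong F((\pi))$, which is exactly what justifies the term-by-term differentiation above.
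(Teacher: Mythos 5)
Your argument is correct: the paper itself gives no proof but simply cites Mason \cite[p.~9]{Mason}, and since Mason's local derivation at $\pp$ is by definition differentiation with respect to a local parameter, your term-by-term computation with the $\pi$-expansion (including the observation that vanishing coefficients $ka_k$ in characteristic $p$ can only help) is exactly the standard justification behind that citation. So this is essentially the same approach, with the details of the reference written out.
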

\begin{proof}
See Mason \cite[p. 9]{Mason}.
\end{proof}

\begin{lemma}\label{Luba}
The function $t$ can be chosen so that
\begin{enumerate}
\item\label{exte} $[K:F(t)]\leq C_1$,
\item\label{dpnotinf} $d(\pp)\geq0$ for all $\pp\ne\pp_\infty$,
\item\label{dpinf} $d(\pp_\infty)\geq-g-2$, and
\item\label{deinf} $\deg\EE \le C_2$.
\end{enumerate}
\end{lemma}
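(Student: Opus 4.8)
The plan is to take $t$ with a pole of minimal possible order at $\pp_\infty$, and then to extract all four statements from the elementary behaviour of local derivations (Lemma \ref{Mason}) together with the fact that the divisor of the differential $dt$ has degree $2g-2$.

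First I would fix the choice of $t$. By Lemma \ref{RR} we have $\ell((g+1)\pp_\infty)\geq 2$, so $L((g+1)\pp_\infty)$ contains a non-constant element; take $t$ to be such an element. Then $t$ has a pole only at $\pp_\infty$, of order $m:=-\ord_{\pp_\infty}(t)$ with $1\leq m\leq g+1$. If $\ch(F)=p>0$ and $t$ is a $p$-th power in $K$, replace $t$ by a $p$-th root of it: this is again a non-constant function with a pole only at $\pp_\infty$, but of strictly smaller pole order, so after finitely many steps we reach a $t$ that is not a $p$-th power (this matches Notation and Assumptions \ref{Natasha}(8), and guarantees that $K/F(t)$ is separable and $dt\neq 0$). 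Since $F$ is algebraically closed, $\pp_\infty$ is the only pole of $t$ and has degree $1$, so by Lemma \ref{Formulae}\eqref{FormulaOrd} applied to the prime of $F(t)$ below $\pp_\infty$ we get $[K:F(t)]=\deg\dd(t)=m\leq g+1=C_1$, which is \eqref{exte}.

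Statements \eqref{dpnotinf} and \eqref{dpinf} are then immediate. If $\pp\neq\pp_\infty$ then $t$ has no pole at $\pp$, i.e.\ $\ord_\pp(t)\geq0$, so Lemma \ref{Mason}\eqref{Mason2} gives $d(\pp)=\ord_\pp(\partial t/\partial\pp)\geq0$. At $\pp_\infty$ we have $\ord_{\pp_\infty}(t)=-m\geq-(g+1)$, so Lemma \ref{Mason}\eqref{Mason1} gives $d(\pp_\infty)\geq\ord_{\pp_\infty}(t)-1\geq -g-2$.

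For \eqref{deinf} I would use that, by definition of the local derivation at $\pp$ as differentiation with respect to a local uniformizer, $\ord_\pp(\partial t/\partial\pp)$ is exactly the order of the meromorphic differential $dt$ at $\pp$; hence $\sum_\pp d(\pp)\,\pp$ is the divisor of $dt$, a canonical divisor, and since all primes have degree $1$ this gives $\sum_\pp d(\pp)=2g-2$. By \eqref{dpnotinf} and \eqref{dpinf} the only prime where $d(\pp)$ can be negative is $\pp_\infty$, with $d(\pp_\infty)\geq -g-2$, so
$$
\deg\EE=\sum_{d(\pp)>0}d(\pp)=(2g-2)-\sum_{d(\pp)<0}d(\pp)\leq(2g-2)+(g+2)=3g=C_2 ,
$$
which is \eqref{deinf}. (Alternatively, the identity $\sum_\pp d(\pp)=2g-2$ can be obtained from the Riemann--Hurwitz formula for $K/F(t)$ together with a computation of the local different exponents of that extension in terms of $\ord_\pp(\partial t/\partial\pp)$.) The only real work here is this last degree computation — identifying $\sum_\pp d(\pp)\,\pp$ with a canonical divisor; everything else is bookkeeping with the lemmas already at hand.
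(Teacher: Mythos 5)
Your proposal is correct and follows essentially the same route as the paper: choosing $t$ non-constant in $L(\pp_\infty^{g+1})$ via Lemma \ref{RR}, computing the degree $[K:F(t)]$ from the pole order, invoking Lemma \ref{Mason} for items (2) and (3), and bounding $\deg\EE$ from the identity $\sum_\pp d(\pp)=2g-2$ (which the paper cites directly from Mason, and which you justify by identifying $\sum_\pp d(\pp)\,\pp$ with the canonical divisor of $dt$). No gaps; the argument is sound.
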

\begin{proof}
Since the integral divisor $\pp_\infty^{g+1}$ of $K$ has degree $g+1$, we have
$$
\ell\left(\pp_\infty^{g+1}\right)=2>1
$$
by Lemma \ref{RR}. Therefore, $L(\pp_\infty^{g+1})$ contains a non-constant element $w$ such that
$$
\dd(w)=\pp_\infty^\alpha,
$$
where $\alpha\leq g+1$. Let us show that $w$ satisfies the conclusions of the lemma.
\begin{enumerate}
\item Let $\mathfrak P_\infty$ be the prime of $F(w)$ below $\pp_\infty$. Observe that the ramification degree of $\pp_\infty$ over $\mathfrak P_\infty$ is $\alpha$ and each prime has degree $1$ in its respective field. Since there is no constant field extension we also conclude that the relative degree of $\pp_\infty$ over $\mathfrak P_\infty$ is $1$. Thus by Lemma \ref{Formulae} \eqref{FormulaOrd} we have $[K:F(w)]=\alpha\leq g+1$ and we can choose $w$ as our new $t$.  If $p=\ch(K) >0$ and $w$ happens to be a $p$-th power, we will replace $w$ by its $p$-th root sufficiently many times until the result is no longer a $p$-th power in $K$.  Observe that taking a $p$-th root will only reduce $\alpha$, and therefore the conclusion of the lemma remains unchanged.  Observe also that we can assume that $dw/dt \not =0$. For the rest of the proof, let $d_w(\pp)$ stands for $\ord_{\pp}\left(\frac{\partial w}{\partial\pp}\right)$. 
\item By Lemma \ref{Mason} \eqref{Mason2} we have that $d_w(\pp)\geq0$ for all $\pp\ne\pp_\infty$,
\item By Lemma \ref{Mason} \eqref{Mason1}, we have $d_w(\pp_\infty)\geq-\alpha-1\geq-g-2$.
\item By Mason \cite[Equation (5) p. 10]{Mason}, we have
$$
\sum_{\pp} d_w(\pp)=\sum_{\pp}\ord_\pp\left(\frac{\partial w}{\partial\pp}\right)=2g-2
$$
since $w$ has non-zero global derivative.  Therefore, by Items \eqref{dpnotinf} and \eqref{dpinf}, we have
$$\sum_{d_w(\pp)>0} d_w(\pp) \leq 2g-2 < 3g, $$
if $\ord_{\pp_{\infty}}\left(\frac{\partial w}{\partial\pp_{\infty}}\right)\geq 0,$
and
$$\sum_{d_w(\pp)>0} d_w(\pp) = 2g-2 -\ord_{\pp_{\infty}}\left(\frac{\partial w}{\partial\pp_{\infty}}\right)\leq 2g-2+g+2=3g,$$
if $\ord_{\pp_{\infty}}\left(\frac{\partial w}{\partial\pp_{\infty}}\right)< 0$.
\end{enumerate}
\end{proof}

\begin{lemma}\label{Ania}
For all $x\in K$ and $\pp$ prime of $K$, we have
\begin{enumerate}
\item\label{gaga1} if $\ord_\pp(x)\geq0$ then
$$
\ord_\pp(x')\ge\max(0,\ord_\pp(x)-1) -d(\pp)
$$
and
\item\label{gaga2} if $\ord_\pp(x)<0$ then
$$
\ord_\pp(x')\ge\ord_\pp(x)-1-d(\pp).
$$
\end{enumerate}
\end{lemma}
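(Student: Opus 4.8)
The plan is to relate the global derivative $x'=dx/dt$ to the local derivative $\partial x/\partial\pp$ via the chain rule $x'=\dfrac{\partial x}{\partial\pp}\cdot\dfrac{\partial \pp\text{-uniformizer}}{\partial t}$; more precisely, fixing a local parameter at $\pp$, one has the relation $x' = \dfrac{\partial x}{\partial\pp}\Big/\dfrac{\partial t}{\partial\pp}$, so that $\ord_\pp(x') = \ord_\pp\!\left(\dfrac{\partial x}{\partial\pp}\right) - \ord_\pp\!\left(\dfrac{\partial t}{\partial\pp}\right) = \ord_\pp\!\left(\dfrac{\partial x}{\partial\pp}\right) - d(\pp)$, using the definition of $d(\pp)$ from Notation and Assumptions \ref{Natasha}(12). (One must first observe that $d(\pp)$ is finite, i.e.\ $\partial t/\partial\pp\ne 0$, which holds because $t$ was chosen with non-zero global derivative and hence non-zero local derivative at every prime.) Once this identity is in hand, each of the two cases reduces immediately to a lower bound on $\ord_\pp(\partial x/\partial\pp)$, which is exactly what Lemma \ref{Mason} supplies.

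First I would treat case \eqref{gaga1}, where $\ord_\pp(x)\ge 0$. If $\ord_\pp(x)\ge 1$, Lemma \ref{Mason}\eqref{Mason1} gives $\ord_\pp(\partial x/\partial\pp)\ge \ord_\pp(x)-1\ge 0$, while if $\ord_\pp(x)=0$, Lemma \ref{Mason}\eqref{Mason2} gives $\ord_\pp(\partial x/\partial\pp)\ge 0$; in both subcases $\ord_\pp(\partial x/\partial\pp)\ge\max(0,\ord_\pp(x)-1)$. Subtracting $d(\pp)$ via the chain-rule identity yields exactly the claimed inequality $\ord_\pp(x')\ge\max(0,\ord_\pp(x)-1)-d(\pp)$.

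For case \eqref{gaga2}, where $\ord_\pp(x)<0$, I would apply Lemma \ref{Mason}\eqref{Mason1} directly to get $\ord_\pp(\partial x/\partial\pp)\ge\ord_\pp(x)-1$, and then subtract $d(\pp)$ to obtain $\ord_\pp(x')\ge\ord_\pp(x)-1-d(\pp)$. The only genuinely delicate point is the chain-rule identity itself: one needs that the local derivation $\partial/\partial\pp$ and the global derivation $d/dt$ are compatible in the sense stated, which is precisely the content of Mason's setup on p.~9 of \cite{Mason} (the local and global derivations differ by the invertible-up-to-order factor $\partial t/\partial\pp$). If one prefers to avoid invoking that compatibility as a black box, an alternative is to argue directly: write $x$ as a Laurent series in a uniformizer $\pi$ at $\pp$, use $x' = (dx/d\pi)(d\pi/dt)$ and $\ord_\pp(d\pi/dt) = -\,\ord_\pp(dt/d\pi) = -d(\pp)$ together with the elementary order estimate for $dx/d\pi$ — but this is just a longhand version of the same computation, so I would keep the proof short by citing Mason for the local/global compatibility and Lemma \ref{Mason} for the local estimates.
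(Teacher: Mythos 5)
Your proposal is correct and follows essentially the same route as the paper: both rest on Mason's identity $\frac{\partial x}{\partial \pp}=\frac{dx}{dt}\frac{\partial t}{\partial \pp}$, giving $\ord_\pp(x')=\ord_\pp\bigl(\frac{\partial x}{\partial\pp}\bigr)-d(\pp)$, and then apply Lemma \ref{Mason} in each of the two cases. Your extra remark that $\partial t/\partial\pp\ne 0$ (so $d(\pp)$ is finite) is a fine precaution the paper leaves implicit, justified by the choice of $t$ with non-zero global derivative.
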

\begin{proof}
From Mason \cite[p. 96]{Mason} we have for any prime $\pp$ (including $\pp_{\infty}$)
\begin{equation}\label{Michal}
\frac{\partial x}{\partial \pp}=\frac{dx}{dt}\frac{\partial t}{\partial \pp}
\end{equation}
hence, if $\ord_\pp(x)\geq0$ then
$$
\ord_\pp(x')=\ord_\pp\left(\frac{dx}{dt}\right)=\ord_\pp\left(\frac{\partial x}{\partial \pp}\right)-\ord_\pp\left(\frac{\partial t}{\partial \pp}\right)\geq\max(0,\ord_\pp(x)-1)-d(\pp)
$$
and if $\ord_\pp(x)<0$ then
$$
\ord_\pp(x')=\ord_\pp\left(\frac{dx}{dt}\right)=\ord_\pp\left(\frac{\partial x}{\partial \pp}\right)-\ord_\pp\left(\frac{\partial t}{\partial \pp}\right)\geq\ord_\pp(x)-1-d(\pp)
$$
by Lemma \ref{Mason}.
\end{proof}

\begin{corollary}\label{Elodie}
\begin{enumerate}
\item\label{Elodie1} Let $x$ be a non constant element of $K$. If $\pp$ is a prime of $K$ such that $\ord_\pp(x)\geq0$ and $\ord_\pp(x')<0$, then $d(\pp)>0$ (so that  $\pp | \EE)$, and we have
$$
\ord_\pp(x')\geq-d(\pp).
$$
\item\label{Raisa} If $x$ is a non constant element of $K$ then $\dd(x')$ divides $\dd(x^2)\EE$.
\end{enumerate}
\end{corollary}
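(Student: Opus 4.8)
The plan is to derive both parts from the local estimates of Lemma~\ref{Ania}, working one prime at a time.

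Part~\eqref{Elodie1} is almost immediate. Suppose $\pp$ is a prime with $\ord_\pp(x)\geq 0$ and $\ord_\pp(x')<0$. Lemma~\ref{Ania}\eqref{gaga1} gives
$$
\ord_\pp(x')\geq\max\bigl(0,\ord_\pp(x)-1\bigr)-d(\pp).
$$
Since the left-hand side is negative while $\max(0,\ord_\pp(x)-1)\geq 0$, we must have $d(\pp)>0$, i.e. $\pp\mid\EE$ by the definition of $\EE$; and discarding the non-negative term $\max(0,\ord_\pp(x)-1)$ from the same inequality yields $\ord_\pp(x')\geq-d(\pp)$.

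For part~\eqref{Raisa} it suffices to check, for every prime $\pp$ of $K$, the inequality of orders
$$
\max\bigl(0,-\ord_\pp(x')\bigr)\ \leq\ 2\max\bigl(0,-\ord_\pp(x)\bigr)+\max\bigl(0,d(\pp)\bigr),
$$
since the right-hand side equals $\ord_\pp\bigl(\dd(x^2)\EE\bigr)$ and the left-hand side equals $\ord_\pp(\dd(x'))$ (if $x'=0$ there is nothing to prove). If $\ord_\pp(x')\geq 0$ the inequality is trivial, so assume $\ord_\pp(x')<0$. If $\ord_\pp(x)\geq 0$, then part~\eqref{Elodie1} gives $d(\pp)>0$ and $-\ord_\pp(x')\leq d(\pp)=\max(0,d(\pp))$, which suffices. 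If instead $\ord_\pp(x)<0$, then Lemma~\ref{Ania}\eqref{gaga2} gives $-\ord_\pp(x')\leq-\ord_\pp(x)+1+d(\pp)$, and since $d(\pp)-\max(0,d(\pp))=\min(0,d(\pp))\leq 0$ while $-\ord_\pp(x)\geq 1$, we conclude $-\ord_\pp(x)+1+d(\pp)\leq-2\ord_\pp(x)+\max(0,d(\pp))$, as wanted.

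There is no real obstacle here: once Lemma~\ref{Ania} is in hand the whole argument is bookkeeping with valuations. The one point needing a little attention is the prime $\pp_\infty$, where $d(\pp_\infty)$ may be negative (Lemma~\ref{Luba}\eqref{dpinf}), so that $\ord_\pp(\EE)=\max(0,d(\pp))$ rather than $d(\pp)$; this is precisely why, in the last case above, one uses the crude bound $-\ord_\pp(x)\geq 1$ instead of a naive cancellation of the $d(\pp)$ terms.
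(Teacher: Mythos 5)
Your proof is correct and follows essentially the same route as the paper: both parts are prime-by-prime applications of Lemma~\ref{Ania}, with part~\eqref{Raisa} reduced to the local inequality $\ord_\pp(\dd(x'))\leq\ord_\pp(\dd(x^2)\EE)$. The only difference is organizational — you split cases on the signs of $\ord_\pp(x')$ and $\ord_\pp(x)$ and handle the possibly negative $d(\pp_\infty)$ via the bound $-\ord_\pp(x)\geq 1$, whereas the paper splits on whether $\pp\mid\EE$ and then massages a product of divisors; the underlying estimates are identical.
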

\begin{proof}
\begin{enumerate}
\item By Lemma \ref{Ania} Item \eqref{gaga1}, for $x$ without a pole at $\pp$ we have
$$
0>\ord_\pp(x')\geq\max(0,\ord_\pp(x)-1)-d(\pp)\geq-d(\pp).
$$
\item If $\pp$ is a pole of $x'$ then
\begin{itemize}
\item either it does not divide $\EE$ (hence $d(\pp)\leq0$), in which case it is a pole of $x$ (by Item \eqref{Elodie1}), and we have $\ord_\pp(x')\geq\ord_\pp(x)-1-d(\pp)\geq \ord_\pp(x)-1$ by Lemma \ref{Ania} \eqref{gaga2}, hence
$$
\ord_\pp(\dd(x'))\leq\ord_\pp(\dd(x))+1;
$$
\item or it divides $\EE$ (hence $d(\pp)>0$), in which case
	\begin{itemize}
	\item either $\ord_\pp(x)<0$, hence $\ord_\pp(x')\geq\ord_\pp(x)-1-d(\pp)$ by Lemma \ref{Ania} \eqref{gaga2}, and we conclude
	$$
	\ord_\pp(\dd(x'))\leq\ord_\pp(\dd(x))+1+d(\pp);
	$$
	\item or $\ord_\pp(x)\geq 0$, hence $\ord_\pp(x')\geq -d(\pp)$ (by Item \eqref{Elodie1}),
	hence
	$$
	\ord_\pp(\dd(x'))\leq d(\pp).
	$$
	\end{itemize}
\end{itemize}
We deduce that $\dd(x')$ divides
$$
\prod_{\pp\nmid\EE}{\pp^{\ord_\pp(\dd(x))+1}}
\prod_{\substack{\pp\mid\EE\\\ord_\pp(\dd(x))>0}}{\pp^{\ord_\pp(\dd(x))+1+d(\pp)}}
\prod_{\substack{\pp\mid\EE\\\ord_\pp(\dd(x))=0}}{\pp^{d(\pp)}}
$$
where in the first product we have $\ord_\pp(\dd(x))>0$. Multiplying the rightmost product by
$$
\prod_{\substack{\pp\mid\EE\\\ord_\pp(\dd(x))>0}}{\pp^{d(\pp)}}
$$
and dividing the `middle product' by the same quantity, we see that $\dd(x')$ divides
$$
\prod_{\substack{\pp\nmid\EE\\\ord_\pp(\dd(x))>0}}{\pp^{\ord_\pp(\dd(x))+1}}
\prod_{\substack{\pp\mid\EE\\\ord_\pp(\dd(x))>0}}{\pp^{\ord_\pp(\dd(x))+1}}
\prod_{\pp\mid\EE}{\pp^{d(\pp)}}
$$
which in turn divides
$$
\prod_{\pp}\pp^{2\ord_\pp(\dd(x))}\prod_{\pp\mid\EE}{\pp^{d(\pp)}}
=\dd(x^2)\EE
$$
which was to be proved.

\end{enumerate}
\end{proof}

\begin{corollary}\label{Katia}
For any $x\in K$ which is not a constant, we have
$$
\deg\dd(x')\leq C_3\deg\dd (x).
$$
\end{corollary}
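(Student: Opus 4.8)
The plan is to reduce immediately to the divisibility statement already established in Corollary~\ref{Elodie}, Item~\eqref{Raisa}. That corollary tells us that $\dd(x')$ divides $\dd(x^2)\EE$ for any non-constant $x\in K$. Passing to degrees (which is monotone with respect to divisibility of integral divisors, by Item~\eqref{RiemannRoch}-type additivity of degree, i.e. $\deg$ is additive on products and $\dd(x^2)=\dd(x)^2$ has degree $2\deg\dd(x)$), we get
$$
\deg\dd(x')\le \deg\dd(x^2)+\deg\EE = 2\deg\dd(x)+\deg\EE.
$$
Now I would invoke Lemma~\ref{Luba}, Item~\eqref{deinf}, which (after the choice of $t$ made there) guarantees $\deg\EE\le C_2=3g$. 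This yields $\deg\dd(x')\le 2\deg\dd(x)+3g$.

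To finish, I would use that $x$ is not a constant, so it has at least one pole and hence $\deg\dd(x)\ge 1$. Therefore $3g\le 3g\cdot\deg\dd(x)$, and combining,
$$
\deg\dd(x')\le 2\deg\dd(x)+3g\deg\dd(x)=(3g+2)\deg\dd(x)=C_3\deg\dd(x),
$$
which is exactly the claimed bound. There is no real obstacle here: the content was already packaged into Corollary~\ref{Elodie}\eqref{Raisa} and the bound $\deg\EE\le C_2$ from Lemma~\ref{Luba}; the only mild point to be careful about is to observe $\deg\dd(x)\ge 1$ so that the additive constant $3g$ can be absorbed into a multiplicative constant, which is why the final constant is $C_3=C_2+2$ rather than something additive.
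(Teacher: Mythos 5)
Your proof is correct and follows exactly the paper's argument: apply Corollary~\ref{Elodie}~\eqref{Raisa} to get $\dd(x')\mid\dd(x^2)\EE$, take degrees, bound $\deg\EE\le C_2$ via Lemma~\ref{Luba}~\eqref{deinf}, and absorb the additive constant using $\deg\dd(x)\ge1$. (The passing attribution of degree-additivity to a ``Riemann--Roch-type'' fact is a harmless slip; everything of substance matches the paper.)
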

\begin{proof}
From Corollary \ref{Elodie} \eqref{Raisa}  we have that $\dd(x')$ divides $\dd(x^2)\EE$ and therefore
$$
\deg(\dd(x')) \leq \deg(\dd(x^2)\EE) \leq 2\deg \dd(x) + C_2 \leq (C_2+2)\deg\dd(x)=C_3\deg\dd(x)
$$
by Lemma \ref{Luba} \eqref{deinf} and definition of $C_3$.
\end{proof}

\begin{lemma}\label{Nicole}
For any non-trivial effective divisor $\mathfrak A$ there exists $y\in K$ such that
\begin{enumerate}
\item the divisor $\mathfrak A\EE$ divides $\nn(y)$;
\item the function $y$ has only one pole at $\pp_{\infty}$; and
\item we have
$$
\deg\dd(y) \leq C_4\deg(\mathfrak A).
$$
\end{enumerate}
\end{lemma}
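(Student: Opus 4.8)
The plan is to realize $y$ as a nonzero element of a Riemann--Roch space of the form $L\!\left(\pp_\infty^{m}(\mathfrak A\EE)^{-1}\right)$ for a single well-chosen $m$, and then to read off the divisibility $\mathfrak A\EE\mid\nn(y)$, the single-pole property, and the height bound directly from the defining inequalities of that space together with the estimate $\deg\EE\le C_2$ of Lemma \ref{Luba}.

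Concretely, I would set $m=g+\deg(\mathfrak A\EE)=g+\deg\mathfrak A+\deg\EE$ and $\mathfrak D=\pp_\infty^{m}(\mathfrak A\EE)^{-1}$, so that $\deg\mathfrak D=g$. A short case analysis on $g$ using Lemma \ref{Formulae}\eqref{RiemannRoch} (comparing $d=g$ with $2g-2$, in the same manner as in the proof of Lemma \ref{RR}) gives $\ell(\mathfrak D)\ge1$, so I may fix $y\in L(\mathfrak D)$ with $y\ne0$. Since $\mathfrak A$ is effective and non-trivial, $\mathfrak D$ is not effective, so $L(\mathfrak D)$ contains no nonzero constant; hence $y\notin F$, and in particular $y$ has at least one pole. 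For every prime $\pp\ne\pp_\infty$ we have $\ord_\pp\mathfrak D=-\ord_\pp(\mathfrak A\EE)\le0$, so $\ord_\pp y\ge\ord_\pp(\mathfrak A\EE)\ge0$; this at once shows that $\pp_\infty$ is the only pole of $y$ and that $\ord_\pp y\ge\ord_\pp(\mathfrak A\EE)$ for every $\pp\ne\pp_\infty$, i.e. $\mathfrak A\EE\mid\nn(y)$. Finally $\dd(y)$ divides $\pp_\infty^{m}$ and $\pp_\infty$ has degree $1$, so, using $\deg\EE\le C_2$ (Lemma \ref{Luba}\eqref{deinf}), $g=C_1-1$, $C_4=C_1+C_2+1$, and $\deg\mathfrak A\ge1$,
$$
\deg\dd(y)\ \le\ m\ =\ \deg\mathfrak A+\deg\EE+g\ \le\ \deg\mathfrak A+(C_1+C_2-1)\ \le\ C_4\deg\mathfrak A ,
$$
as required.

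The one step that needs care is the divisibility at the prime $\pp_\infty$ itself: membership in $L(\mathfrak D)$ only constrains $\ord_\pp y$ for $\pp\ne\pp_\infty$, whereas the single-pole property forbids $y$ from vanishing at $\pp_\infty$, so the two requirements are compatible only when $\pp_\infty\nmid\mathfrak A\EE$. Accordingly the argument should be run with the prime-to-$\pp_\infty$ part of $\mathfrak A\EE$ in place of $\mathfrak A\EE$ (equivalently, one uses that $\pp_\infty$ lies outside the support of $\mathfrak A\EE$, as is available in the situations where the lemma is applied). Granting this, the Riemann--Roch count above produces a $y$ with all three properties, and no other point presents any genuine obstacle; the only other thing to watch is that the passage $m\le C_4\deg\mathfrak A$ really does use $\deg\mathfrak A\ge1$, which holds because $\mathfrak A$ is effective and non-trivial.
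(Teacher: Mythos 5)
Your proof is correct and follows essentially the same route as the paper's: the paper takes $y$ to be a non-constant element of $L\!\left(\pp_\infty^{d}(\mathfrak A\EE)^{-1}\right)$ with $d=\deg(\mathfrak A\EE)+g+1$, using $\ell\geq 2$ from Lemma \ref{RR}, whereas you use the exponent $\deg(\mathfrak A\EE)+g$ and argue non-constancy from the non-effectivity of the twisted divisor --- an immaterial difference. The caveat you raise about $\pp_\infty$ possibly lying in the support of $\mathfrak A\EE$ applies equally to the paper's own argument (which silently asserts both $\dd(y)=\pp_\infty^\alpha$ and $\mathfrak A\EE\mid\nn(y)$) and is resolved by the choice of $\pp_\infty$ in the applications, so your treatment is, if anything, the more careful one.
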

\begin{proof}
Let
$$
\mathfrak B=\frac{\mathfrak A\EE}{\pp_\infty^d}
$$
where $d=\deg(\mathfrak A\EE)+g+1$. Since $\mathfrak B^{-1}$ has degree $g+1$, we have
$$
\ell(\mathfrak B^{-1})\geq2>1
$$
by Lemma \ref{RR}. Therefore, the vector space $L(\mathfrak B^{-1})$ contains a non-constant element $y$ such that $\dd(y)=\pp_\infty^\alpha$ where $1\leq\alpha\leq d$, and $\nn(y)$ is divisible by $\mathfrak A\EE$, so that Items (1) and (2) are satisfied. Finally observe that
$$
\deg(\dd(y))=\alpha\leq d=\deg(\mathfrak A\EE)+g+1=\deg(\mathfrak A)+\deg(\EE)+g+1\leq \deg(\mathfrak A)+C_2+C_1,
$$
where the last inequality holds by Lemma \ref{Luba}. We finally get
$$
\deg(\dd(y))\leq\deg(\mathfrak A)+C_2+C_1
\leq (C_2+C_1+1)\deg(\mathfrak A)=C_4\deg(\mathfrak A),
$$
where the last inequality comes from the fact that $\deg\mathfrak A\geq1$.
\end{proof}


\section{Intermediate Theorem}
\setcounter{equation}{0}

This section is devoted to the proof of Theorem \ref{Savka} below. In order to state the theorem we introduce the following notation.

\begin{notation}
\begin{enumerate}
\item Let $r\geq 2$ and $M$ be positive natural numbers.
\item If $\bar c=(c_0,\dots,c_{M-1})$ is a sequence of distinct elements of $F$, we will write
$$
c_{i,j,n}=\frac{c_i-\xi^nc_j}{1-\xi^n}
$$
for any indices $i$ and $j$ and for any $n\in\{1,\dots,r-1\}$.

\item Given $\bar c$ as above, let $\ell(\bar c)$ be equal to $3$ if for all indices $i,j,k,m,n$ we have either $c_{i,j,n} \not =c_{i,k,m}$, or for all indices $i,j,k$ we have
    $$
    [F_0(c_i,c_j,c_k,\xi)\colon F_0(c_i,c_j,c_k)]=r-1
    $$
    (in particular, the latter happens if $\ch(F)=0$ and $c_i$ are rational numbers). Otherwise set $\ell(\bar c)=r+1$.
\end{enumerate}
\end{notation}

\begin{theorem}\label{Savka}
Let $a,\nu\in K$ and $(x_0,\dots, x_{M-1})$ be a sequence of elements of $K$ such that at least one $x_i$ has non-zero derivative. Let $\bar c=(c_0,\dots,c_{M-1})$ be a sequence of distinct elements of $F$. If
$$
M\geq r^2\ell(\bar c)\left(1+C_3\left(5r+\frac{r+1}{r-1}\right)\right)+1
$$
and
$$
x_{n}^r=(\nu+c_n)^r-a,\, n=0,\dots,M-1
$$
then either $a=0$ or there exist $\gamma\in K$ such that $\gamma'=0$, and $\xi_0$ an $r$-th root of unity, such that
$$
a=(\xi_0\nu+\gamma)^r.
$$
\end{theorem}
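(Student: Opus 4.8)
The plan is to factor the relation $x_n^r = (\nu+c_n)^r - a$ using the $r$-th roots of unity and to extract strong divisibility constraints on the divisors of $\nu$ and of the $x_n$, then to combine these with the degree bounds on derivatives from Section~2 (Corollary~\ref{Katia}, Lemma~\ref{Nicole}) to force the height of $\nu$ — and of the relevant auxiliary functions — to be so small, relative to $M$, that the only escape is $a=0$ or the degenerate form $a=(\xi_0\nu+\gamma)^r$ with $\gamma'=0$. I expect the argument to run in several stages.

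First I would set up the algebra. Assume $a\neq 0$; pick a fixed $r$-th root $\alpha$ of $a$ (in a finite extension if necessary, though one should be able to stay in $K$ or control the extension degree). For each $n$ the equation $x_n^r = (\nu+c_n)^r - a$ factors as $x_n^r = \prod_{j=0}^{r-1}(\nu+c_n - \xi^j\alpha)$, so each zero/pole of $\nu+c_n-\xi^j\alpha$ that is not "shared" with the other factors must occur to a multiplicity divisible by $r$ at every prime outside a small bad set (the support of $\EE$, the poles of $\nu$ and $\alpha$, and the finitely many primes where two of the linear factors collide). Comparing the equations for two indices $i\neq j$ gives $(\nu+c_i)^r-(\nu+c_j)^r = x_i^r - x_j^r$, and the left side factors through the quantities $c_{i,j,n}=\frac{c_i-\xi^n c_j}{1-\xi^n}$: indeed $(\nu+c_i)-\xi^n(\nu+c_j) = (1-\xi^n)(\nu + c_{i,j,n})$. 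This is exactly why the $c_{i,j,n}$ and the parameter $\ell(\bar c)$ enter — either enough of these linear-in-$\nu$ translates are genuinely distinct, or the cyclotomic extension $[F_0(c_i,c_j,c_k,\xi):F_0(c_i,c_j,c_k)]=r-1$ is large, and in both cases one controls how the factors can overlap, keeping the "exceptional" primes few.

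The core estimate is then a Mason–Stothers / $abc$-for-function-fields type inequality applied to the relation $x_n^r = \prod_j(\nu + c_n - \xi^j\alpha)$, or equivalently a counting of zeros versus the support of a suitable radical divisor. Concretely: away from the bad set each $\nu + c_n - \xi^j\alpha$ is $r$ times a divisor, so $\HH(\nu+c_n-\xi^j\alpha)$ is large unless $\nu + c_n-\xi^j\alpha$ is itself (essentially) an $r$-th power times something supported on the bad set; using that there are $M$ values of $n$ and that the $c_n$ are distinct, the differences $(\nu+c_n-\xi^j\alpha)-(\nu+c_m-\xi^k\alpha)$ are constants or simple translates, and a pigeonhole/linear-algebra argument on an $L(\mathfrak A)$-space (as in Lemma~\ref{Nicole}) forces many of these $r$-th-power-like functions to be linearly dependent over $F$. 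Differentiating with respect to $t$ and invoking Corollary~\ref{Katia} ($\deg\dd(x')\le C_3\deg\dd(x)$) and Lemma~\ref{Luba} converts this into an inequality of the shape $M - 1 \le r^2\ell(\bar c)\big(1 + C_3(5r + \tfrac{r+1}{r-1})\big)$; the hypothesis on $M$ contradicts this unless the derivative hypothesis fails on every factor, i.e. unless $(\nu + c_n - \xi^j\alpha)' = 0$ for a suitable $j$, which means $\xi^j\alpha - \nu$ has zero derivative; writing $\gamma = \nu - \xi^j\alpha$ (so $\gamma'=0$) and $\xi_0 = \xi^{-j}$ gives $\alpha = \xi_0(\nu - \gamma)$... adjusting signs, $a = \alpha^r = (\xi_0\nu + \gamma)^r$ with $\gamma'=0$, which is the stated conclusion. (Here one uses that at least one $x_i$ has nonzero derivative to guarantee $\nu$ is nonconstant, so the degenerate branch is genuinely the only alternative.)

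The main obstacle, I expect, is the bookkeeping of the "bad set" of primes and getting the constant in the final inequality to come out exactly as $r^2\ell(\bar c)(1 + C_3(5r + \frac{r+1}{r-1})) + 1$: one must carefully track (i) the contribution of $\deg\EE$ through Lemma~\ref{Luba}, (ii) the multiplicities picked up when two of the $r$ linear factors $\nu+c_n-\xi^j\alpha$ share a zero — controlled by $\ell(\bar c)$ and the $c_{i,j,n}$ analysis — and (iii) the loss incurred by passing to derivatives and back (the $C_3$ and the $\frac{r+1}{r-1}$ terms). A secondary subtlety is whether $\alpha=\sqrt[r]{a}$ and $\xi$ can be kept in $K$ or whether one works over $K(\xi,\alpha)$ and descends; the definition of $\ell(\bar c)$ is clearly designed to absorb exactly this issue, so the argument should split into the two cases in its definition, with the $r-1$ factor appearing precisely when the cyclotomic extension is nontrivial. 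Once the inequality is in hand, the dichotomy $a=0$ versus $a=(\xi_0\nu+\gamma)^r$ is immediate.
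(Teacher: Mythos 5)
Your opening moves do match the paper's: the factorization $(\nu+c_i)^r-(\nu+c_j)^r=r(c_i-c_j)\prod_n(\nu+c_{i,j,n})$ is exactly Lemma \ref{LemmaVolodia}, and the role you assign to $\ell(\bar c)$ --- controlling collisions among the linear factors --- is essentially Lemma \ref{Paul}, which bounds how many of the $x_i$ can share a common zero and hence gives $\deg\LL_\nn\geq d/\ell$ (Corollary \ref{Francois}). But the quantitative core of your argument is missing. The step ``a pigeonhole/linear-algebra argument on an $L(\mathfrak A)$-space forces many of these $r$-th-power-like functions to be linearly dependent \dots\ differentiating converts this into an inequality of the shape $M-1\leq\cdots$'' is a placeholder, not an argument: you never exhibit a single function whose zero divisor accumulates contributions from \emph{all} $M$ indices while its pole divisor stays of size $O(d/(M-1))$. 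That object is the whole content of the paper's proof. Differentiating $x_n^r+a=(\nu+c_n)^r$ and eliminating $\nu+c_n$ gives $(rx_n'x_n^{r-1}+a')^r=r^r\nu'^r(x_n^r+a)^{r-1}$; reducing modulo $\nn(x_n)$ (after clearing poles with the auxiliary $y$ of Lemma \ref{Nicole}) kills every $n$-dependent term and leaves the $n$-independent quantity $\Delta=a'^r-r^r\nu'^ra^{r-1}$ vanishing modulo $\nn(x_n)$ for \emph{every} $n$. Hence if $\Delta\neq0$ its zero divisor is divisible by $\LL_\nn$, of degree at least $d/\ell$, while Lemma \ref{Wilson} bounds $\deg\dd(\Delta)$ by $O(d/(M-1))$; comparing yields $M\leq B(r,\ell)$. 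So $\Delta=0$, and then $a^{r-1}=(a'/(r\nu'))^r$ forces $a=b^r$ with $b'=\xi_0\nu'$, i.e.\ $a=(\xi_0\nu+\gamma)^r$ with $\gamma'=0$. Without $\Delta$ or an equivalent device, your count never closes, and your route to the degenerate branch ``$(\nu+c_n-\xi^j\alpha)'=0$'' is asserted rather than derived.

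A second concrete problem is your plan to work in $K(\alpha,\xi)$ with $\alpha^r=a$. The genus of $K(\alpha)$ is not controlled by $g$ --- by Riemann--Hurwitz it can grow with $\HH(a)$, which is not bounded a priori --- so a Mason--Stothers inequality applied over that extension cannot produce a bound depending only on $g$, $r$ and $\ell(\bar c)$. And $\ell(\bar c)$ does not ``absorb exactly this issue'': it concerns the constants $c_{i,j,n}$ and common zeros of triples $x_i,x_j,x_k$ inside $K$ itself, not the root extraction. The paper deliberately never extracts $a^{1/r}$ until \emph{after} $\Delta=0$ has already shown that $a$ is an $r$-th power in $K$.
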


Throughout this section we will suppose that $a,\nu,x_0,\dots,x_{M-1}$ and $\bar c$ satisfy the hypothesis of Theorem \ref{Savka}.

The following notation will also be used throughout the section.

\begin{notation}\label{Groth}
\begin{enumerate}
\item Write $u_n=x_n^r$ and
$$
\DD=\prod_{i=0}^{M-1}\dd(x_i)\qquad\textrm{and}\qquad
\NN=\prod_{i=0}^{M-1}\nn(x_i).
$$
\item Let $d= \deg\DD$.
\item Let $\LL_\dd=\LCM(\dd(x_0),\ldots,\dd(x_{n-1}))$ (where $\LCM$ stands for ``the least common multiple''). Let $\LL_\nn=\LCM(\nn(x_0),\dots,\nn(x_{n-1}))$.
\item Let $y\in K$ be such that
\begin{itemize}
\item the divisor $\mathfrak \LL_\dd\EE$ divides $\nn(y)$;
\item the function $y$ has only one pole at $\pp_{\infty}$; and
\item\label{Charlotte} $\deg\dd(y) \leq C_4\deg\LL_\dd$
\end{itemize}
(such a $y$ exists by Lemma \ref{Nicole} and because $\deg\LL_\dd\ne0$ since by hypothesis at least one $x_i$ is non-constant).
\end{enumerate}
\end{notation}

\begin{remark}\label{pinf}{\rm
In the previous section there was no assumption whatsoever on $\pp_{\infty}$. We will now set it to be a valuation of $K$ \emph{not occurring as a pole or zero of any element of the (finite) set $\{x_0,\dots,x_{M-1},\nu,a\}$}.}
\end{remark}

\begin{lemma}\label{LemmaVolodia}
The following equality holds\,:
\begin{equation}\label{Volodia}
u_i-u_j=r(c_i-c_j)\prod_{n=1}^{r-1}\left[\nu+c_{i,j,n}\right]
\end{equation}
where $c_{i,j,n}$ have been defined in Notation \ref{Groth} \eqref{Mitia}.
\end{lemma}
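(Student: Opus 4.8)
The plan is to reduce the identity to two standard cyclotomic facts. First I would note that the constant $a$ drops out: since by hypothesis $u_n=x_n^r=(\nu+c_n)^r-a$, we have
$$
u_i-u_j=(\nu+c_i)^r-(\nu+c_j)^r .
$$
Then I would apply the factorization $X^r-Y^r=\prod_{n=0}^{r-1}(X-\xi^nY)$, valid over a field containing a primitive $r$-th root of unity $\xi$, with $X=\nu+c_i$ and $Y=\nu+c_j$. (The existence of such a $\xi$ — which in the situations where the lemma is applied is guaranteed because $\ch(F)$ does not divide $r$ — is also exactly what makes $1-\xi^n\neq0$ for $1\leq n\leq r-1$, so that the quantities $c_{i,j,n}$ are well defined.)

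Next I would peel off the $n=0$ factor, which is simply $(\nu+c_i)-(\nu+c_j)=c_i-c_j$, and rewrite each remaining factor using the definition $c_{i,j,n}=(c_i-\xi^nc_j)/(1-\xi^n)$:
$$
(\nu+c_i)-\xi^n(\nu+c_j)=(1-\xi^n)\nu+c_i-\xi^nc_j=(1-\xi^n)\bigl(\nu+c_{i,j,n}\bigr),\qquad n=1,\dots,r-1 .
$$
Collecting these identities gives
$$
u_i-u_j=(c_i-c_j)\left(\prod_{n=1}^{r-1}(1-\xi^n)\right)\prod_{n=1}^{r-1}\bigl(\nu+c_{i,j,n}\bigr).
$$

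Finally I would invoke the elementary evaluation $\prod_{n=1}^{r-1}(1-\xi^n)=r$: dividing $X^r-1=\prod_{n=0}^{r-1}(X-\xi^n)$ by $X-1$ yields $1+X+\dots+X^{r-1}=\prod_{n=1}^{r-1}(X-\xi^n)$, and setting $X=1$ gives the claim. Substituting this into the displayed formula produces exactly \eqref{Volodia}. There is no genuine obstacle in this argument; the only two points that deserve a word of care are that $\xi$ is really a primitive $r$-th root of unity (so that both the $X^r-Y^r$ factorization and the well-definedness of the $c_{i,j,n}$ are in force) and the identity $\prod_{n=1}^{r-1}(1-\xi^n)=r$.
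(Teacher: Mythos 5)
Your proposal is correct and follows essentially the same route as the paper: drop $a$ by subtracting, factor $(\nu+c_i)^r-(\nu+c_j)^r$ over the $r$-th roots of unity, absorb the $(1-\xi^n)$ factors into the $c_{i,j,n}$, and use $\prod_{n=1}^{r-1}(1-\xi^n)=r$. The only difference is that you spell out explicitly the evaluation of $\prod_{n=1}^{r-1}(1-\xi^n)$ and the role of $\xi$ being primitive, which the paper leaves implicit.
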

\begin{proof}
From Equation \eqref{Gauss}, we have
$$
\begin{aligned}
u_i-u_j&=(\nu+c_i)^r-(\nu+c_j)^r\\
&=(c_i-c_j)\prod_{n=1}^{r-1}[(\nu+c_i)-\xi^n(\nu+c_j)]\\
&=(c_i-c_j)\prod_{n=1}^{r-1}[(1-\xi^n)\nu+(c_i-\xi^nc_j)]\\
&=(c_i-c_j)\prod_{n=1}^{r-1}(1-\xi^n)\prod_{n=1}^{r-1}\left[\nu+\frac{c_i-\xi^nc_j}{1-\xi^n}\right]
\end{aligned}
$$
hence
$$
u_i-u_j=r(c_i-c_j)\prod_{n=1}^{r-1}\left[\nu+c_{i,j,n}\right].
$$
\end{proof}

\begin{lemma}\label{Stelios}
\begin{enumerate}
\item At most one $x_i$ is an element of $F$.
\item For any prime $\pp$ of $K$, either
\begin{equation}\label{Vera}
\ord_\pp \dd(u_n)=\ord_\pp \dd(u_m)\geq(r-1)\ord_\pp\dd(\nu)
\end{equation}
for all $m$ and $n$, or there exists $n_0=n_0(\pp)$ such that
\begin{equation}\label{Chaim}
(r-1)\ord_\pp\dd(\nu)=\ord_\pp\dd(u_n)>\ord_\pp \dd(u_{n_0})
\end{equation}
for all $n$ distinct from $n_0$.
\end{enumerate}%
\end{lemma}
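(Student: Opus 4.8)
The plan is to exploit the product formula from Lemma \ref{LemmaVolodia} together with a ``pigeonhole on the pole orders'' argument at each prime $\pp$. First I would prove part (1): if two distinct $x_i$, $x_j$ were both constants (elements of $F$), then $u_i-u_j=x_i^r-x_j^r$ is a nonzero constant, while by \eqref{Volodia} it equals $r(c_i-c_j)\prod_{n=1}^{r-1}(\nu+c_{i,j,n})$; since $c_i\ne c_j$, this would force $\prod_{n=1}^{r-1}(\nu+c_{i,j,n})$ to be a constant, which is easy to rule out unless $\nu$ itself is constant (compare pole divisors: a product of $r-1$ factors $\nu + (\text{const})$ has the same pole divisor as $\dd(\nu)^{r-1}$, so it is constant iff $\nu$ is constant). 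But if $\nu$ were constant, then every $u_n=(\nu+c_n)^r-a$ would have $u_n' = -a'$ independent of $n$, and then every $x_n^r$ would differ from $-a$-translates of constants, forcing all $x_n$ to have derivative obtainable from $a$ alone — one checks this contradicts the standing hypothesis that some $x_i$ has non-zero derivative together with the relation forcing all the $x_n$ essentially constant. (I would spell this last step out carefully; it is the one place where the hypothesis ``at least one $x_i$ has non-zero derivative'' must be invoked.)

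For part (2), fix a prime $\pp$ of $K$ and set $v_n=-\ord_\pp\dd(u_n)=\ord_\pp(u_n)$ if this is $\le 0$, i.e. work with $\ord_\pp(u_n)$ directly (it is negative exactly when $\pp$ is a pole of $u_n$). From $u_n=(\nu+c_n)^r-a$ and the non-archimedean (ultrametric) property of $\ord_\pp$, I have $\ord_\pp(u_n)\ge\min\bigl(r\,\ord_\pp(\nu+c_n),\ \ord_\pp(a)\bigr)$, with equality unless the two terms have equal order. The key observation is that the quantities $\ord_\pp(\nu+c_n)$ can differ from $\ord_\pp(\nu)$ for \emph{at most one} index $n$: indeed if $\ord_\pp(\nu+c_i)\ne\ord_\pp(\nu)$ and $\ord_\pp(\nu+c_j)\ne\ord_\pp(\nu)$ for $i\ne j$, then (since the $c_n$ are constants, which have $\ord_\pp=0$ for $\pp\ne\pp_\infty$... but here $\pp_\infty$ is excluded as a pole/zero of the $x_i$, $\nu$, $a$ by Remark \ref{pinf}, so in fact all $c_n$ have $\ord_\pp(c_n)=0$ unless $\pp$ sits over a place where some $c_n$ is special) one derives $\ord_\pp(c_i-c_j)>0$; since $c_i-c_j\in F^\times$ is a unit at every prime of $K$ (as $F$ is the constant field), this is impossible. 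Hence there is at most one ``bad'' index $n_0$, and for all $n\ne n_0$ we get $\ord_\pp(\nu+c_n)=\ord_\pp(\nu)$, so $\ord_\pp\dd(u_n)=\ord_\pp\dd\bigl((\nu+c_n)^r-a\bigr)$ takes a common value governed by $\min(r\,\ord_\pp(\nu),\ord_\pp(a))$; if additionally $\ord_\pp(u_{n_0})$ has a strictly larger pole we are in case \eqref{Chaim}, otherwise all orders agree and we are in case \eqref{Vera}. The inequality $\ge (r-1)\ord_\pp\dd(\nu)$ in both cases follows because $\ord_\pp\dd(u_n)=-\ord_\pp(u_n)$ and $-\ord_\pp\bigl((\nu+c_n)^r-a\bigr)\le -r\,\ord_\pp(\nu+c_n)$ when $\pp$ is a pole, i.e. $\le r\,\ord_\pp\dd(\nu)$... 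I must be careful here: the claimed bound is $(r-1)\ord_\pp\dd(\nu)$, not $r$, so I expect the correct argument uses Lemma \ref{LemmaVolodia} instead — namely $\dd(u_n-u_m)$ divides $\dd(\nu)^{r-1}$ times a divisor supported away from poles of $\nu$, which pins the pole of $u_n$ to within $\dd(\nu)^{r-1}$ of the pole of $u_m$.

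The main obstacle I anticipate is bookkeeping the case analysis in part (2) cleanly — in particular, correctly handling the prime(s) $\pp$ lying over places where the constants $c_n$ or the element $a$ interact with $\nu$, and pinning down exactly why the lower bound is $(r-1)\ord_\pp\dd(\nu)$ rather than $r\,\ord_\pp\dd(\nu)$; this is where Lemma \ref{LemmaVolodia} (which produces a product of only $r-1$ factors $\nu+c_{i,j,n}$, each with pole divisor dividing $\dd(\nu)$) is the right tool rather than the naive ultrametric estimate on $(\nu+c_n)^r-a$. Part (1) should be routine once the ``$\nu$ constant'' subcase is dispatched using the non-vanishing-derivative hypothesis.
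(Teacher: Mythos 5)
Your final approach --- after the mid-course correction --- is the paper's approach, and the two loose ends you flag do close; but the detour is worth commenting on. For part (2) the paper goes directly to Lemma \ref{LemmaVolodia}: each factor $\nu+c_{i,j,n}$ differs from $\nu$ by a constant, so its pole divisor is exactly $\dd(\nu)$, and \eqref{Volodia} gives the exact identity $\ord_\pp\dd(u_i-u_j)=(r-1)\ord_\pp\dd(\nu)$ for \emph{every} pair $i\ne j$. The dichotomy \eqref{Vera}/\eqref{Chaim} is then the standard ultrametric trichotomy for a family whose pairwise differences all have the same pole order at $\pp$: writing $d_n=\ord_\pp\dd(u_n)$, if all $d_n$ agree then each is at least $\ord_\pp\dd(u_n-u_m)=(r-1)\ord_\pp\dd(\nu)$; if $d_{n_0}<d_{n_1}$ for some pair, then $d_{n_1}=(r-1)\ord_\pp\dd(\nu)$, and comparing each $j\ne n_0$ against $n_0$ forces $d_j=(r-1)\ord_\pp\dd(\nu)$ as well. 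So your entire first attempt via the ultrametric estimate on $(\nu+c_n)^r-a$, and the accompanying worry about indices where $\ord_\pp(\nu+c_n)\ne\ord_\pp(\nu)$, is unnecessary: only pole divisors enter, and $\dd(\nu+c)=\dd(\nu)$ for any constant $c$. For part (1) the paper reads \eqref{Volodia} in the opposite direction and thereby avoids your subcase in which $\nu$ is constant: taking $x_i\in F$ and $x_k\notin F$ (the latter exists since some $x_i$ has non-zero derivative), equation \eqref{Volodia} for the pair $(i,k)$ has non-constant left-hand side, so $\nu\notin F$; then for any $j\ne i$ the right-hand side of \eqref{Volodia} for $(i,j)$ has pole divisor $\dd(\nu)^{r-1}\ne1$, so $u_j\notin F$. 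Your version also closes, and more simply than the derivative computation you sketch: if $\nu$ and some $x_i$ were both constant, then $a=(\nu+c_i)^r-x_i^r$ would be constant, hence every $x_n^r=(\nu+c_n)^r-a$ would be constant, contradicting the hypothesis that some $x_i$ has non-zero derivative.
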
%
\begin{proof}
\begin{enumerate}
\item Fix an index $k$ and suppose that $x_k$ is not constant (we know by hypothesis of Theorem \ref{Savka} that there exists at least one such $k$). Suppose that there exists an index $i \not = k$ such that $x_i$ is constant. From Equation \eqref{Volodia}, substituting $k$ for $j$, it follows that $\nu$ is not a constant. Hence for any $j\ne i$, Equation \eqref{Volodia} for $i$ and $j$ implies that $x_j$ is not a constant.
\item Fix a prime $\pp$ of $K$. If for all indices $n$ and $m$ we have $\ord_\pp \dd(u_n)=\ord_\pp \dd(u_m)$, then by Equation \eqref{Volodia} for $n$ and $m$, we have also
$$
\ord_\pp\dd(u_m)\geq(r-1)\ord_\pp\dd(\nu).
$$
Hence \eqref{Vera} holds. Otherwise there exist indices $n_0\ne n_1$ such that for
$$
\ord_\pp \dd(u_{n_0})<\ord_\pp\dd(u_{n_1}).
$$
From Equation \eqref{Volodia} with indices $n_0$ and $n_1$, we have
$$
(r-1)\ord_\pp\dd(\nu)=\ord_\pp\dd(u_{n_1}).
$$
From the same equation for any index $j\ne n_0$ and $n_0$ we conclude that
$$
\ord_\pp\dd(u_j)=(r-1)\ord_\pp\dd(\nu).
$$
Hence \eqref{Chaim} holds.
\end{enumerate}
\end{proof}

\begin{proposition}\label{yheight}
The following inequalities hold (see Notation \ref{Natasha})\,:
\begin{enumerate}%
\item for any index $n$ and prime $\pp$ of $K$
$$
\ord_\pp \dd (x_n)\leq\frac{\ord_\pp\DD}{M-1};
$$
\item\label{deglcm}  $\displaystyle \deg \LL_\dd \leq \frac{d}{M-1}$;
\item $\displaystyle \deg \dd(x_n) \leq \frac{d}{M-1}$ for any index $n$.
\end{enumerate}%
\end{proposition}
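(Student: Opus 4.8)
The plan is to prove Item~(1) as a purely local statement at each prime and then obtain Items~(2) and~(3) by summing over all primes of $K$. The whole argument rests on Lemma~\ref{Stelios}(2), once one observes that passing from $x_n$ to $u_n=x_n^r$ multiplies pole orders by $r$, i.e. $\ord_\pp\dd(u_n)=r\,\ord_\pp\dd(x_n)$ for every prime $\pp$ and every index $n$ (since $\ord_\pp(x_n^r)=r\,\ord_\pp(x_n)$ and $r>0$).

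First I would fix a prime $\pp$ and invoke the dichotomy of Lemma~\ref{Stelios}(2). Dividing the relevant orders by $r$, this dichotomy says that either all the $\ord_\pp\dd(x_n)$ are equal to a common value $v$, or there is a single index $n_0$ with $\ord_\pp\dd(x_{n_0})<\ord_\pp\dd(x_n)=:w$ for every $n\neq n_0$. In the first case $\ord_\pp\DD=\sum_{i=0}^{M-1}\ord_\pp\dd(x_i)=Mv$, so $\ord_\pp\dd(x_n)=v=\ord_\pp\DD/M\leq\ord_\pp\DD/(M-1)$, the last inequality holding because $\dd(x_n)$ is effective and hence $v=\ord_\pp\DD/M\geq0$. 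In the second case $\ord_\pp\DD=\ord_\pp\dd(x_{n_0})+(M-1)w\geq(M-1)w$, again because $\ord_\pp\dd(x_{n_0})\geq0$, so that $w\leq\ord_\pp\DD/(M-1)$, and then a fortiori $\ord_\pp\dd(x_{n_0})<w\leq\ord_\pp\DD/(M-1)$ too. Thus in all cases $\ord_\pp\dd(x_n)\leq\ord_\pp\DD/(M-1)$ for every $n$, which is Item~(1). (Note $M-1>0$ since $M$ is large.)

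For Item~(3), since under our standing assumption $F$ is algebraically closed all primes of $K$ have degree~$1$, summing Item~(1) over all primes gives $\deg\dd(x_n)=\sum_{\pp}\ord_\pp\dd(x_n)\leq\frac{1}{M-1}\sum_{\pp}\ord_\pp\DD=\frac{d}{M-1}$. For Item~(2), observe that $\ord_\pp\LL_\dd=\max_n\ord_\pp\dd(x_n)$, so Item~(1) yields $\ord_\pp\LL_\dd\leq\ord_\pp\DD/(M-1)$ at every prime, and summing again gives $\deg\LL_\dd\leq d/(M-1)$. I do not expect any genuine obstacle here: the only point requiring a little care is the two-case analysis in Item~(1), together with the (repeated) use of the effectivity of pole divisors, which is exactly what allows one to discard the exceptional term and replace the denominator $M$ by $M-1$; all the substance is already contained in Lemma~\ref{Stelios}.
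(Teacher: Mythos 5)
Your proof is correct and follows essentially the same route as the paper: Item (1) comes from the dichotomy in Lemma \ref{Stelios}(2) (transferred from $u_n=x_n^r$ to $x_n$ by dividing orders by $r$), which shows at least $M-1$ of the $M$ factors of $\DD$ have $\pp$-order at least $\ord_\pp\dd(x_n)$, and Items (2) and (3) follow by taking the maximum over $n$ and summing over primes. Your two-case write-up is just a more explicit version of the paper's one-line appeal to Lemma \ref{Stelios}.
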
%
\begin{proof}%
\begin{enumerate}
\item By Lemma \ref{Stelios}, we have
$$
\ord_{\pp}[\dd(x_0)\dots\dd(x_{n})\dots\dd(x_{M-1})] \geq (M-1)\ord_{\pp}\dd(x_n)
$$
for any prime $\pp$ in $K$ and any index $n$ (note that we consider the product of $M$ factors on the left-hand side).
\item For any prime $\pp$ such that $\ord_\pp(\LL_\dd)>0$, by definition of $\LL_\dd$ there exists an index $n$ such that $\ord_\pp(\LL_\dd)=\ord_\pp(\dd(x_n))$ and therefore by Item (1) we have
    $$
    \ord_\pp(\LL_\dd)\leq\frac{\ord_\pp\DD}{M-1}.
    $$\
\item This part follows directly from either (1) or (2).
\end{enumerate}
\end{proof}%

\begin{lemma}\label{nuheight}%
The following inequalities hold\,:
\begin{enumerate}
\item\label{ordnu} For any prime $\pp$ of $K$ and for all but at most one index $n$ we have
$$
(r-1)\ord_\pp\dd(\nu)\leq r\ord_\pp\dd(x_n).
$$
\item We have
$$
(r-1)(M-1)\deg\dd(\nu) \leq rd.
$$
\item\label{orda} For any prime $\pp$ of $K$ and for all but at most one index $n$ we have
$$
\ord_\pp\dd(a)\leq r\ord_\pp\dd(x_n).
$$
\item We have
$$
(M-1)\deg\dd(a) \leq rd.
$$
\end{enumerate}
\end{lemma}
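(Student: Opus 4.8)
The four assertions fall into two pointwise estimates (the first and third inequalities) and two ``global'' ones, (2) and (4), which I would get from the pointwise ones simply by summing over all primes of $K$; so the real content is in the two pointwise statements.

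For the first inequality the plan is to start from the trivial identity $\ord_\pp\dd(u_n)=\ord_\pp\dd(x_n^r)=r\,\ord_\pp\dd(x_n)$, valid at every prime $\pp$ and every index $n$, and then fix $\pp$ and invoke the second part of Lemma~\ref{Stelios}. In its first alternative $\ord_\pp\dd(u_n)\ge(r-1)\ord_\pp\dd(\nu)$ for every $n$; in its second alternative there is a single index $n_0=n_0(\pp)$ with $\ord_\pp\dd(u_n)=(r-1)\ord_\pp\dd(\nu)$ for all $n\neq n_0$. Substituting $r\,\ord_\pp\dd(x_n)$ for $\ord_\pp\dd(u_n)$ gives $(r-1)\ord_\pp\dd(\nu)\le r\,\ord_\pp\dd(x_n)$ for all but at most the one index $n_0(\pp)$. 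For (2), fix $\pp$, let $S_\pp$ be the set of indices at which the first inequality holds (so $|S_\pp|\ge M-1$), sum that inequality over $n\in S_\pp$, use $\ord_\pp\dd(x_n)\ge0$ to enlarge the right-hand sum to $\sum_{n=0}^{M-1}\ord_\pp\dd(x_n)=\ord_\pp\DD$ and $\ord_\pp\dd(\nu)\ge0$ to replace $|S_\pp|$ by $M-1$ on the left; this yields $(M-1)(r-1)\ord_\pp\dd(\nu)\le r\,\ord_\pp\DD$, and summing over all $\pp$ gives $(M-1)(r-1)\deg\dd(\nu)\le r\,d$.

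For the third inequality the starting point is the factorization $a=(\nu+c_n)^r-x_n^r=\prod_{k=0}^{r-1}\bigl((\nu+c_n)-\xi^k x_n\bigr)$, valid for each $n$. Taking $\ord_\pp$, using that each $\xi^k$ is a unit and that $\dd(\nu+c_n)=\dd(\nu)$ because $c_n\in F$, I get
\[
\ord_\pp\dd(a)\ \le\ r\,\max\bigl(\ord_\pp\dd(\nu),\ \ord_\pp\dd(x_n)\bigr)
\]
at every prime $\pp$ and every index $n$. So the third inequality holds outright whenever $\ord_\pp\dd(x_n)\ge\ord_\pp\dd(\nu)$, and in particular, since $\ord_\pp\dd(\nu)=0$ there, at every prime that is not a pole of $\nu$, for all indices at once. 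What remains is a prime $\pp$ with $\ord_\pp(\nu)<0$ together with its ``bad'' indices, those $n$ with $\ord_\pp(x_n)>\ord_\pp(\nu)$, and one must show there is at most one of them at such a $\pp$; I expect this to be the main obstacle. The mechanism to exploit is that $\ord_\pp(x_n)>\ord_\pp(\nu)$ forces, in $x_n^r=(\nu+c_n)^r-a$, the cancellation of the leading coefficients of $(\nu+c_n)^r$ and of $a$ at the level $r\,\ord_\pp(\nu)$; that coefficient of $(\nu+c_n)^r$ equals the one of $\nu^r$ and so does not depend on $n$, so one has to descend to the next levels (where the $n$-dependence enters), control the differences via Lemma~\ref{LemmaVolodia}, which gives $\ord_\pp(u_n-u_m)=\ord_\pp(r)+(r-1)\ord_\pp(\nu)$ at such a $\pp$, and use that $\ord_\pp(x_n^r)$ is a multiple of $r$ to pin down $\ord_\pp(x_n)$ for all but one index. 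Granting the third inequality, (4) follows from it exactly as (2) followed from the first: at each $\pp$ sum the third inequality over the at least $M-1$ good indices, enlarge the right side to $\ord_\pp\DD$, and sum over all primes, obtaining $(M-1)\deg\dd(a)\le r\,d$.
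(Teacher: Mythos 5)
Your items (1), (2) and (4) are fine and essentially follow the paper's route: (1) is exactly the paper's one‑line deduction from Lemma \ref{Stelios} via $\ord_\pp\dd(u_n)=r\,\ord_\pp\dd(x_n)$, and your derivations of (2) and (4) by summing the pointwise inequality over the at least $M-1$ good indices are a harmless variant of the paper's passage through Proposition \ref{yheight}. The genuine problem is item (3): you have correctly located the difficulty, but your sketch does not close it, so the proof is incomplete. Your plan is to show that at a pole $\pp$ of $\nu$ there is at most one index $n$ with $\ord_\pp\dd(x_n)<\ord_\pp\dd(\nu)$. That cannot be extracted from Lemma \ref{LemmaVolodia}: the right-hand side of \eqref{Volodia} has pole order exactly $(r-1)\ord_\pp\dd(\nu)$ at such a $\pp$, so the dichotomy of Lemma \ref{Stelios} only pins $\ord_\pp\dd(u_n)$ down to $(r-1)\ord_\pp\dd(\nu)$ for all but one $n$, i.e.\ $\ord_\pp\dd(x_n)\geq\frac{r-1}{r}\ord_\pp\dd(\nu)$, which is strictly weaker than $\ord_\pp\dd(x_n)\geq\ord_\pp\dd(\nu)$. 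Concretely, the scenario of \eqref{Chaim} at a pole of $\nu$, namely $\ord_\pp\dd(u_n)=(r-1)\ord_\pp\dd(\nu)<r\,\ord_\pp\dd(\nu)=\ord_\pp\dd(a)$ for every $n\neq n_0$, is not excluded by anything you invoke, and in it the inequality $\ord_\pp\dd(a)\leq r\,\ord_\pp\dd(x_n)$ fails at every $n\neq n_0$. The divisibility of $\ord_\pp\dd(u_n)$ by $r$ does not rescue this: for $r=2$ and $\ord_\pp\dd(\nu)=2$ one gets $\ord_\pp\dd(x_n)=1$ and $\ord_\pp\dd(a)=4$.

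For what it is worth, the paper's own proof of (3) is a two-line substitution of item (1) into $\ord_\pp\dd(a)\leq\max\{r\,\ord_\pp\dd(x_n),r\,\ord_\pp\dd(\nu)\}$ which, as printed, replaces $r\,\ord_\pp\dd(\nu)$ by $\frac{r}{r-1}\ord_\pp\dd(x_n)$, whereas item (1) only yields $r\,\ord_\pp\dd(\nu)\leq\frac{r^2}{r-1}\ord_\pp\dd(x_n)$, and $\frac{r^2}{r-1}>r$; so the obstacle you flagged is present there too. What item (1) does give unconditionally is $\ord_\pp\dd(a)\leq\frac{r^2}{r-1}\ord_\pp\dd(x_n)\leq 2r\,\ord_\pp\dd(x_n)$ for all but one index, which suffices for the later applications (Corollary \ref{Nicolas}, Lemma \ref{Wilson}) after adjusting constants, but it is not the inequality stated in item (3).
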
%
\begin{proof}%
\begin{enumerate}
\item This comes from Lemma \ref{Stelios} and by definition of $u_n=x_n^r$.
\item By Proposition \ref{yheight} we have
$$
\ord_\pp\dd(x_n)\leq \frac{\ord_\pp\DD}{M-1}
$$
for all $n$. From Item (1) we deduce
$$
(r-1)\ord_\pp\dd(\nu)\leq\frac{r}{M-1}\ord_\pp\DD
$$
and the claim follows.
\item From Equation \eqref{Gauss} we have for any index $n$ and any prime $\pp$ in $K$
$$
\ord_\pp\dd(a)\leq\max\{r\ord_\pp\dd(x_n),r\ord_\pp\dd(\nu)\}.
$$
Hence by Item (1),  for all but at most one index $n$, we have
$$
\begin{aligned}
\ord_\pp\dd(a)&\leq \max\{r\ord_\pp\dd(x_n),\frac{r}{r-1}\ord_\pp\dd(x_n)\}\\
&\leq r\ord_\pp\dd(x_n)
\end{aligned}
$$
which was to be proved.
\item From Item (3) and Proposition \ref{yheight} we have
$$
\ord_\pp\dd(a)\leq r\ord_\pp\dd(x_n) \leq \frac{r \ord_\pp\DD}{M-1},
$$
hence
$$
\deg(\dd(a))\leq\frac{rd}{M-1}
$$
by definition of $d$.
\end{enumerate}
\end{proof}%

\begin{corollary}\label{Nicolas}
The divisors $\dd(x_n)$, $\dd(a)$, $\dd(\nu)$, $\dd(x_n')$, $\dd(a')$ and $\dd(\nu')$ divide $\nn(y^{2r+1})$. Moreover we have
$$
\deg\dd(y)\leq C_4\frac{d}{M-1}.
$$
\end{corollary}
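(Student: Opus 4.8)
The plan is to combine the height bounds already established with the construction of $y$ in Notation \ref{Groth}. First I would record that, by the choice of $y$, the divisor $\LL_\dd\EE$ divides $\nn(y)$, and that $\dd(y)=\pp_\infty^{\alpha}$ with $y$ having its only pole at $\pp_\infty$; hence $\nn(y^{2r+1})=\nn(y)^{2r+1}$ is divisible by $(\LL_\dd\EE)^{2r+1}$, and certainly by $(\LL_\dd)^{2r+1}\EE$. For each index $n$ we have $\dd(x_n)\mid\LL_\dd$ by definition of the least common multiple, so $\dd(x_n)\mid\nn(y)$; moreover by Lemma \ref{nuheight}\,\eqref{ordnu} applied prime-by-prime, $(r-1)\ord_\pp\dd(\nu)\le r\ord_\pp\dd(x_n)$ for all but at most one $n$, so $\ord_\pp\dd(\nu)\le r\ord_\pp\dd(x_n)\le r\ord_\pp\LL_\dd$ (using $r/(r-1)\le r$ and the fact that if two indices are excluded for the "or"-clause of Lemma \ref{Stelios} they still agree at $\pp$), whence $\dd(\nu)\mid\LL_\dd^{\,r}\mid\nn(y^{2r+1})$. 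Similarly Lemma \ref{nuheight}\,\eqref{orda} gives $\dd(a)\mid\LL_\dd^{\,r}$, hence $\dd(a)\mid\nn(y^{2r+1})$.

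For the derivatives, the key input is Corollary \ref{Elodie}\,\eqref{Raisa}: for any nonconstant $z\in K$, $\dd(z')$ divides $\dd(z^2)\EE=\dd(z)^2\EE$. Applying this with $z=x_n$ (those $x_n$ that are nonconstant; a constant $x_n$ has $x_n'=0$ and nothing to prove), with $z=\nu$, and with $z=a$ when these are nonconstant, I get that $\dd(x_n')$ divides $\dd(x_n)^2\EE\mid\LL_\dd^{\,2}\EE$, that $\dd(\nu')$ divides $\dd(\nu)^2\EE\mid\LL_\dd^{\,2r}\EE$, and that $\dd(a')$ divides $\dd(a)^2\EE\mid\LL_\dd^{\,2r}\EE$. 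In every case the bounding divisor is of the form $\LL_\dd^{\,2r}\EE$, which divides $(\LL_\dd\EE)^{2r}\mid\nn(y)^{2r}\mid\nn(y^{2r+1})$ (here one extra power of $y$ absorbs the stray factor $\EE$, since $\EE\mid\nn(y)$). This gives the first assertion for all six families of divisors. One must only be slightly careful with the at-most-one exceptional index in Lemma \ref{nuheight}: if $x_{n_0}$ is the exceptional index at a given $\pp$, the inequality $\ord_\pp\dd(\nu)\le r\ord_\pp\dd(x_n)$ still holds for every \emph{other} $n$, hence for $\LL_\dd$, which is all we need.

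For the final inequality, the chain $\deg\dd(y)\le C_4\deg\LL_\dd$ is exactly Notation \ref{Groth}\,\eqref{Charlotte}, and Proposition \ref{yheight}\,\eqref{deglcm} gives $\deg\LL_\dd\le d/(M-1)$; composing the two yields $\deg\dd(y)\le C_4\,d/(M-1)$. I do not expect any serious obstacle here: the only thing requiring attention is keeping track of which power of $\LL_\dd$ suffices (the worst case being $\LL_\dd^{\,2r}\EE$ coming from $\dd(\nu')$ and $\dd(a')$) so that the uniform exponent $2r+1$ on $y$ genuinely works, and checking that the handful of "all but one index" exceptions in Lemma \ref{nuheight} never obstruct divisibility by the least common multiple.
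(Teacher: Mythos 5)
Your proposal is correct and follows essentially the same route as the paper: it uses the defining properties of $y$ (that $\LL_\dd\EE$ divides $\nn(y)$ and $\deg\dd(y)\leq C_4\deg\LL_\dd$), Lemma \ref{nuheight} to bound $\dd(a)$ and $\dd(\nu)$ in terms of $\dd(x_n)$, Corollary \ref{Elodie}\,\eqref{Raisa} for the derivatives, and Proposition \ref{yheight}\,\eqref{deglcm} for the degree bound. Routing the divisibilities through powers of $\LL_\dd$ rather than directly through $\nn(y)$ prime by prime is only a cosmetic repackaging, and your handling of the at-most-one exceptional index is the right observation.
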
%
\begin{proof}
Recall that $y\in K$ is such that the divisor $\LL_\dd\EE$ divides $\nn(y)$
(see Notation \ref{Groth}), hence for all primes $\pp\in K$ and for all index $n$, we have
\begin{equation}\label{Maestro}
\ord_\pp(\dd(x_n))\leq\ord_\pp(\LL_\dd)\leq\ord_\pp(\nn(y)).
\end{equation}
(reacall that $\LL_\dd$ is the least common multiple of the $\dd(x_n)$).

Also, since $\deg\dd(y) \leq C_4\deg\LL_\dd$ (see Notation \ref{Groth}), by Proposition \ref{yheight} \eqref{deglcm} we get
$$
\deg\dd(y)\leq C_4\frac{d}{M-1}.
$$
\begin{enumerate}
\item Equation \eqref{Maestro} implies that $\dd(x_n)$ divides $\nn(y)$.
\item From Lemma \ref{nuheight} \eqref{orda}, for all prime $p$ of $K$ we have $\ord_\pp\dd(a)\leq r\ord_\pp\dd(x_n)$, hence $\ord_\pp\dd(a)\leq r\ord_\pp\nn(y)$ by Equation \eqref{Maestro}. So $\dd(a)$ divides $\nn(y^r)$.
\item From Lemma \ref{nuheight} \eqref{ordnu}, we have
$$
(r-1)\ord_\pp\dd(\nu)\leq r\ord_\pp\dd(x_n),
$$
hence
$$
\ord_\pp\dd(\nu)\leq\frac{r}{r-1}\ord_\pp\dd(x_n)\leq\frac{r}{r-1}\ord_\pp y\leq(2r+1)\ord_\pp y
$$
by Equation \eqref{Maestro}. Therefore, $\dd(\nu)$ divides $\nn(y^{2r+1})$.
\item By Corollary \ref{Elodie} \eqref{Raisa}, the pole divisor of $x_n'$ divides $\dd(x_n^2)\EE$, which in turn divides $\nn(y^{2+1})$. Observe that $3$ is less than $2r+1$.
\item By Corollary \ref{Elodie} \eqref{Raisa} again, the pole divisor of $a'$ divides $\dd(a^2)\EE$, which in turn divides $\nn(y^{2r})\nn(y)$ by (2) and because $\EE$ divides $\nn(y)$. Hence $\dd(a')$ divides $\nn(y^{2r+1})$.
\item Similarly, by Corollary \ref{Elodie} \eqref{Raisa}, the pole divisor of $\nu'$ divides $\dd(\nu^2)\EE$, hence by Item (3)
$$
\ord_\pp\dd(\nu')\leq\ord_\pp\dd(\nu^2\EE)\leq
\left(2\frac{r}{r-1}+1\right)\ord_\pp y\leq(2r+1)\ord_\pp y
$$
and we conclude that $\dd(\nu')$ divides $\nn(y^{2r+1})$.
\end{enumerate}
\end{proof}

\begin{lemma}\label{Paul}
\begin{enumerate}
\item For any set of distinct indices $n_1,\dots,n_{r+1}$, the functions $x_{n_i}$ do not have a common zero.
\item If the characteristic $p$ of $K$ does not divide $r$ and for all indices $i,j,k,m,n$ we have that $c_{i,j,n} \not =c_{i,k,m}$, then for any three distinct indices $i$, $j$ and $k$, the functions $x_i$, $x_j$ and $x_k$ do not have a common zero.
\item Suppose $r\ne2$. If for all indices $i, j,k$ we have that
$$
[F_0(c_i,c_j,c_k,\xi)\colon F_0(c_i,c_j,c_k)]=r-1
$$
then for all distinct indices $i, j, k$ we have that  $x_i$, $x_j$ and $x_k$ do not have a common zero. In particular, this is true if $c_i$ are rational numbers.
\end{enumerate}
\end{lemma}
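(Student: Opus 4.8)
The plan is, in each part, to assume for contradiction that a prime $\pp$ of $K$ is a common zero of the indicated $x_n$'s and to derive a contradiction; note that then $\ord_\pp u_n=r\,\ord_\pp x_n>0$ for those indices, where $u_n=x_n^r$. For part~(1): if $\ord_\pp\nu<0$, then for any two of the relevant indices $n_a\ne n_b$ Lemma~\ref{LemmaVolodia} gives $u_{n_a}-u_{n_b}=r(c_{n_a}-c_{n_b})\prod_{n=1}^{r-1}(\nu+c_{n_a,n_b,n})$; since each $c_{n_a,n_b,n}$ lies in $F$, every factor $\nu+c_{n_a,n_b,n}$ has order $\ord_\pp\nu$ at $\pp$, so the right-hand side has order $(r-1)\ord_\pp\nu<0$ there while the left-hand side has order $>0$ --- impossible. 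Hence $\ord_\pp\nu\ge 0$, and then $a=(\nu+c_{n_i})^r-x_{n_i}^r$ shows $\ord_\pp a\ge 0$. Reducing the identity $x_{n_i}^r+a=(\nu+c_{n_i})^r$ modulo $\pp$ (the residue field is $F$) yields $(\bar\nu+c_{n_i})^r=\bar a$ for every $i$, where $\bar\nu,\bar a$ denote the residues; the $r+1$ elements $\bar\nu+c_{n_i}$ of $F$ are pairwise distinct because the $c_{n_i}$ are, so $T^r-\bar a\in F[T]$ would have more than $r$ roots, a contradiction.

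For parts~(2) and~(3), suppose $x_i,x_j,x_k$ with distinct indices share a zero $\pp$. The same order computation (now from $\ord_\pp(u_i-u_j)>0$) forces $\ord_\pp\nu\ge 0$; put $\gamma=\bar\nu\in F$. Since $r(c_i-c_j)$ is a nonzero constant, Lemma~\ref{LemmaVolodia} together with $\ord_\pp(u_i-u_j)>0$ forces $\ord_\pp(\nu+c_{i,j,n})>0$ for some $n\in\{1,\dots,r-1\}$, i.e. $\gamma=-c_{i,j,n}$; likewise $\gamma=-c_{i,k,n'}$ and $\gamma=-c_{j,k,n''}$ for suitable $n',n''\in\{1,\dots,r-1\}$. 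In case~(2) the first two relations give $c_{i,j,n}=c_{i,k,n'}$ with $j\ne k$, contradicting the hypothesis, and we are done.

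In case~(3) set $L=F_0(c_i,c_j,c_k)$. As $\xi$ is a root of the cyclotomic polynomial $\Phi_r\in\Z[X]$, viewed in $L[X]$, we have $[L(\xi):L]\le\deg\Phi_r\le r-1$, so the hypothesis $[L(\xi):L]=r-1$ forces $r$ to be prime and $L(\xi)/L$ to be the full cyclotomic extension, with Galois group $\{\sigma_t:t\in(\Z/r\Z)^\times\}$, $\sigma_t(\xi)=\xi^t$. From $\gamma=-c_{i,k,n'}$ one gets $\xi^{n'}=(\gamma+c_i)/(\gamma+c_k)$ (the denominator is nonzero, else $c_i=c_k$), and since $r$ is prime $\xi^{n'}$ generates $\mu_r$, so $L(\xi)\subseteq L(\gamma)$; conversely $\gamma\in L(\xi)$, hence $L(\gamma)=L(\xi)$ and the $L$-conjugates of $\gamma$ are $\sigma_t(\gamma)=-c_{i,k,\,tn'\bmod r}$ for $t\in(\Z/r\Z)^\times$, that is, exactly $\{-c_{i,k,m}:m=1,\dots,r-1\}$. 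The same computation applied to $\gamma=-c_{j,k,n''}$ shows this conjugate set also equals $\{-c_{j,k,m}:m=1,\dots,r-1\}$. Using $c_{\ell,k,m}=c_k+(c_\ell-c_k)(1-\xi^m)^{-1}$ the resulting set equality becomes $\rho\,S=S$, where $S=\{(1-\xi^m)^{-1}:m=1,\dots,r-1\}=\{(1-\zeta)^{-1}:\zeta\in\mu_r\setminus\{1\}\}$ and $\rho=(c_i-c_k)/(c_j-c_k)\ne 1$. Transporting multiplication by $\rho$ through the bijection $\zeta\mapsto(1-\zeta)^{-1}$ turns it into the affine map $h(\zeta)=\rho^{-1}\zeta+(1-\rho^{-1})$, which permutes $\mu_r\setminus\{1\}$ and fixes $1$, hence permutes $\mu_r$; summing $h$ over $\mu_r$ and using $\sum_{\zeta\in\mu_r}\zeta=0$ gives $r(1-\rho^{-1})=0$, so $\rho=1$, i.e. $c_i=c_j$ --- a contradiction.

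The order estimates and the reduction modulo $\pp$ are routine; the heart of the argument is case~(3). Its two delicate points are: translating the field-degree hypothesis into the purely combinatorial equality $\{c_{i,k,m}\}_m=\{c_{j,k,m}\}_m$ of conjugate sets (this is where one needs the hypothesis to make $r$ prime, so that every power $\xi^{n'}$ is again a primitive root and the Galois action is all of $(\Z/r\Z)^\times$), and then recognizing that an affine self-bijection of the finite set $\mu_r$ must have trivial translation part --- the step that rules out the would-be relation.
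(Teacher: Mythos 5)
Your proof is correct; parts (1) and (2) coincide with the paper's treatment, but part (3) takes a genuinely different route. For (1) the paper simply cites Pasten's Lemma 3.3, whose content is essentially the argument you spell out (a common zero forces $\ord_\pp\nu\ge0$, reduction modulo $\pp$ then exhibits $r+1$ distinct roots of $T^r-\bar a$); for (2) the paper derives exactly the same relation $c_{i,j,n}=c_{i,k,m}$ from a common zero and contradicts the hypothesis. In part (3) the paper also starts from $c_{i,j,n}=c_{i,k,m}$, clears denominators to get $(c_k-c_i)+\xi^{n-m}(c_i-c_j)+\xi^{n}(c_j-c_k)=0$, and reads this as a short $L$-linear relation among powers of $\xi$ with $L=F_0(c_i,c_j,c_k)$: since the degree hypothesis makes the minimal polynomial of $\xi$ over $L$ have $r$ nonzero terms, no three-term relation can survive when $r>3$, and the case $r=3$ is disposed of by an explicit two-equation computation. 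You instead observe that the degree hypothesis forces $r$ to be prime, identify the full $L$-conjugate orbit of $\gamma=\bar\nu$ in two ways as $\{-c_{i,k,m}\}_m=\{-c_{j,k,m}\}_m$, and eliminate the resulting dilation $\rho S=S$ via the affine self-map of $\mu_r$ and the vanishing character sum. Your version is uniform in $r$ (no separate $r=3$ case) at the price of the Galois-theoretic setup; the paper's is more elementary linear algebra. Both arguments (like the paper's) implicitly rely on $p\nmid r$ -- to invert $r$ and indeed to have a primitive $r$-th root of unity at all -- so this is not a gap relative to the original.
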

\begin{proof}
\begin{enumerate}
\item Since we have assumed that the field of constants is algebraically closed, the proof in Pasten \cite[Lemma 3.3]{Pasten} goes through for the general case essentially unchanged.
\item From \eqref{Volodia}, we have for any indices $i$ and $j$
$$
x_i^r-x_j^r=u_i-u_j=r(c_i-c_j)\prod_{n=1}^{r-1}\left[\nu+c_{i,j,n}\right].
$$
Suppose now that $\pp$ is a prime of $K$ which is a common zero of $x_i$, $x_j$ and $x_k$ for some distinct indices $i$, $j$ and $k$. Consequently, for some $n$ and $m$, $\pp$ is a zero of $\nu+c_{i,j,n}$ and $\nu+c_{i,k,m}$, hence of $c_{i,j,n}-c_{i,k,m}\in F$, implying
\begin{equation}\label{Philip}
c_{i,j,n}=c_{i,k,m}.
\end{equation}
\item Suppose that Equation \eqref{Philip} holds for some $i,j,k,m$ and $n$. Without loss of generality, assume $n\geq m$. By definition of $c_{i,j,n}$, from Equation \eqref{Philip} we get
$$
\begin{aligned}
0&=(1-\xi^m)(c_i-\xi^nc_j)-(1-\xi^n)(c_i-\xi^mc_k)\\
&=\xi^m(c_k-c_i)+\xi^n(c_i-c_j)+\xi^{n+m}(c_j-c_k)\\
&=\xi^m[(c_k-c_i)+\xi^{n-m}(c_i-c_j)+\xi^n(c_j-c_k)].
\end{aligned}
$$
If $n=m$, since $c_j\ne c_k$ (by hypothesis of Theorem \ref{Savka}), then $\xi^n=1$, which is impossible since $$1\leq n\leq r-1.$$ Otherwise, $1$, $\xi^{n-m}$ and $\xi^n$ are linearly dependent over $F_0(c_i,c_j,c_k)$. This  contradicts our assumption on the degree of the extension if $r>3$.  If $r=3$, since $n >m$,  we must have that $n=2$ and $m=1$, yielding
$$
\begin{aligned}
0&=(c_k-c_i)+\xi(c_i-c_j)+\xi^2(c_j-c_k)\\
&=(c_k-c_i)+\xi(c_i-c_j)-(1+\xi)(c_j-c_k)\\
&=(2c_k-c_i-c_j)+\xi(c_i-2c_j+c_k).
\end{aligned}
$$
The last equation under our assumptions is equivalent to the system
$$
\left \{
\begin{aligned}
2c_k-c_i-c_j=0\\
c_i-2c_j+c_k=0\\
\end{aligned}
\right .
$$
Replacing $c_i$ in the first equation by $2c_j-c_k$ we obtain $2c_k-2c_j +c_k -c_j=0$, i.e. $c_k=c_j$ contradicting our assumptions on $\bar c$ in the hypothesis of Theorem \ref{Savka}.
Therefore the assumption of Item (2) holds.
\end{enumerate}
\end{proof}

\begin{notation}
Let $\ell\geq2$ be a natural number such that any $\ell$ of the $x_i$ are coprime (such an $\ell$ exists by Lemma \ref{Paul}).
\end{notation}

Recall that $\LL_\nn$ stands for the least common multiple of the $\nn(x_n)$ (see Notation \ref{Groth}).

\begin{corollary}\label{Francois}
The following inequality holds\,:
$$
\deg\LL_\nn\geq\frac{d}{\ell}\ .
$$
\end{corollary}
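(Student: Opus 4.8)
The plan is to compare the two divisors prime by prime and then pass to degrees. First I would record the elementary identity $\deg\nn(x_i)=\deg\dd(x_i)=\HH(x_i)$ for every index $i$ (Notation \ref{Natasha}), so that
\[
d=\deg\DD=\sum_{i=0}^{M-1}\deg\dd(x_i)=\sum_{i=0}^{M-1}\deg\nn(x_i)=\sum_{\pp}\sum_{i=0}^{M-1}\ord_\pp\nn(x_i),
\]
the last sum running over all primes $\pp$ of $K$.

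Next I would fix a prime $\pp$ and use the defining property of $\ell$: since any $\ell$ of the $x_i$ are coprime, at most $\ell-1$ of the functions $x_0,\dots,x_{M-1}$ can vanish at $\pp$. For each such index one has $\ord_\pp\nn(x_i)\le\ord_\pp\LL_\nn$ by definition of the least common multiple, while for the remaining indices $\ord_\pp\nn(x_i)=0$. Hence $\sum_{i=0}^{M-1}\ord_\pp\nn(x_i)\le(\ell-1)\ord_\pp\LL_\nn\le\ell\,\ord_\pp\LL_\nn$.

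Summing this bound over all primes $\pp$ and combining with the displayed expression for $d$ gives $d\le\ell\sum_{\pp}\ord_\pp\LL_\nn=\ell\deg\LL_\nn$, i.e. $\deg\LL_\nn\ge d/\ell$, as claimed. There is no real obstacle here: the only point deserving a word is that a constant $x_i$ (there is at most one, by Lemma \ref{Stelios}) has trivial zero divisor and contributes nothing to either side, so it does not affect the count; otherwise the statement is a direct consequence of the coprimality packaged into $\ell$ together with the equality of the degrees of the zero and pole divisors.
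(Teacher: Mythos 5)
Your proof is correct and follows essentially the same route as the paper: both arguments work prime by prime, use the coprimality encoded in $\ell$ to bound the number of $x_i$ vanishing at a given prime, bound each local contribution by $\ord_\pp\LL_\nn$, and then sum over primes using $\deg\NN=\deg\DD=d$. The only cosmetic difference is that the paper orders the vanishing orders and bounds the local sum by $s\cdot\ord_\pp x_{i_1}$ with $s<\ell$, which is the same estimate you obtain with at most $\ell-1$ terms each at most $\ord_\pp\LL_\nn$.
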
%
\begin{proof}
Let $\pp$ be a prime such that $\ord_\pp\NN>0$ (where $\NN$ is the product of the numerator divisors of the $x_n$). Further, let $x_{i_1}$, \dots, $x_{i_s}$, with $s<\ell$, be all the functions in the sequence $(x_i)$ with a zero at $\pp$. Without loss of generality, assume $$
\ord_\pp x_{i_1}\geq\dots\geq\ord_\pp x_{i_s}.
$$
We have $\ord_\pp\LL_\nn=\ord_\pp x_{i_1}$. Also, we have
$$
\ord_\pp\NN\leq s\cdot\ord_\pp x_{i_1}<\ell\cdot\ord_\pp x_{i_1}=\ell\cdot\ord_\pp\LL_\nn,
$$
hence
$$
d=\deg(\DD)=\deg(\NN)\leq\ell\deg(\LL_\nn).
$$
\end{proof}

\begin{lemma}
We have
\begin{equation}\label{EqDelta}
(rx'_nx_n^{r-1}+a')^r=r^r\nu'^r(x_n^r+a)^{r-1}.
\end{equation}
\end{lemma}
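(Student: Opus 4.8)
The plan is to derive \eqref{EqDelta} directly from \eqref{Gauss} by differentiating once and then taking an $r$-th power. First I would rewrite the defining relation \eqref{Gauss} in the form
$$
x_n^r + a = (\nu + c_n)^r,
$$
which simply moves $a$ to the left-hand side.

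Next I would differentiate this identity with respect to $t$, using the fact that the usual differentiation rules (product rule, chain rule) apply to the global derivation with respect to $t$ (see Notation and Assumptions \ref{Natasha}, item 10), and using that each $c_n$ lies in the field of constants $F$, so $c_n' = 0$. This yields
$$
r\, x_n' x_n^{r-1} + a' = r\,\nu'\, (\nu + c_n)^{r-1}.
$$

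Finally I would raise both sides of the last equation to the $r$-th power, obtaining
$$
\left(r\, x_n' x_n^{r-1} + a'\right)^r = r^r \nu'^r (\nu + c_n)^{r(r-1)} = r^r \nu'^r \big((\nu + c_n)^r\big)^{r-1},
$$
and then substitute $(\nu + c_n)^r = x_n^r + a$ back in from the rewritten form of \eqref{Gauss} to conclude
$$
\left(r\, x_n' x_n^{r-1} + a'\right)^r = r^r \nu'^r (x_n^r + a)^{r-1},
$$
which is exactly \eqref{EqDelta}. There is no real obstacle here: the argument is a one-line differentiation followed by an algebraic substitution, and the only point worth making explicit is that $c_n' = 0$ because $c_n \in F$, together with the validity of the ordinary calculus rules for the derivation $'=\frac{d}{dt}$.
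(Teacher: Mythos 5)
Your proof is correct and is essentially the argument the paper delegates to Pasten's Equation (3.2) via ``the obvious change of variables'': differentiate $x_n^r+a=(\nu+c_n)^r$ using $c_n'=0$, then raise to the $r$-th power so that $(\nu+c_n)^{r(r-1)}$ can be rewritten as $(x_n^r+a)^{r-1}$. Nothing is missing; your version simply makes the cited computation self-contained.
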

\begin{proof}
This can easily be derived from Pasten \cite[Equation (3.2)]{Pasten} through the obvious change of variables.
\end{proof}

\begin{notation}
Set $\Delta= a'^r-r^r\nu'^ra^{r-1}$ (this is just the ``part'' of Equation \eqref{EqDelta} that does not depend on $n$).
\end{notation}

\begin{lemma}\label{Wilson}%
If $\Delta\ne 0$ then the following inequality holds\,:
$$
\deg \dd(\Delta)\leq\frac{C_3r^2d}{M-1}\left(1+\frac{1}{r-1}\right).
$$
\end{lemma}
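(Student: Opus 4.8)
The plan is to bound $\deg\dd(\Delta)$ by controlling the pole divisor of $\Delta=a'^r-r^r\nu'^ra^{r-1}$ at every prime $\pp$ of $K$, using the comparison divisor $y$ from Notation \ref{Groth}. Since $\Delta$ is built out of $a$, $a'$, $\nu'$ and $a^{r-1}$, a pole of $\Delta$ at $\pp$ forces a pole of at least one of the two summands; hence $\ord_\pp\dd(\Delta)\le\max\{r\,\ord_\pp\dd(a'),\ r\,\ord_\pp\dd(\nu')+(r-1)\,\ord_\pp\dd(a)\}$. By Corollary \ref{Nicolas}, each of $\dd(a')$, $\dd(\nu')$ and $\dd(a)$ divides $\nn(y^{2r+1})$; in fact the sharper statements inside the proof of Corollary \ref{Nicolas} give $\ord_\pp\dd(a)\le r\,\ord_\pp\nn(y)$ and $\ord_\pp\dd(a')\le 2r\cdot\ord_\pp\nn(y)+\ord_\pp\EE$, and similarly $\ord_\pp\dd(\nu')\le(2\tfrac{r}{r-1}+1)\ord_\pp\nn(y)$. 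The idea is to combine these so that $\dd(\Delta)$ divides a product of $\nn(y)$ to a bounded power times $\EE$ to a bounded power, and then take degrees.

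First I would record the divisibility $\dd(\Delta)\mid \nn(y^{N})$ for an explicit $N$: from the first term, $\dd(a'^r)\mid\nn(y^{r(2r+1)})$ crudely, but to land on the stated constant I would instead track the two terms separately and use that $\EE\mid\nn(y)$. Writing $e_\pp=\ord_\pp\nn(y)$, the first term contributes at most $r(2r+1)e_\pp$ crudely; the second contributes at most $r(2\tfrac{r}{r-1}+1)e_\pp+(r-1)r e_\pp$. Taking the max and summing over $\pp$ gives $\deg\dd(\Delta)\le \big(\text{const}\big)\cdot\deg\dd(y)$. Then I would invoke Corollary \ref{Nicolas}, which gives $\deg\dd(y)\le C_4\frac{d}{M-1}$ — wait, the target has $C_3$, not $C_4$, so the bookkeeping must be done more carefully: rather than passing through $\deg\dd(y)$, I would apply Corollary \ref{Katia} ($\deg\dd(x')\le C_3\deg\dd(x)$) directly to $a'$ and $\nu'$, and Corollary \ref{Elodie}\eqref{Raisa} together with Lemma \ref{nuheight}, to get $\deg\dd(a')\le C_3\deg\dd(a)\le C_3\frac{rd}{M-1}$ and $\deg\dd(\nu')\le C_3\deg\dd(\nu)\le C_3\frac{r}{r-1}\frac{d}{M-1}$, using $(r-1)\deg\dd(\nu)\le\frac{rd}{M-1}$ pointwise-then-summed from Lemma \ref{nuheight}\eqref{ordnu} and Proposition \ref{yheight}.

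From there the estimate is: $\deg\dd(\Delta)\le\max\{r\deg\dd(a'),\ r\deg\dd(\nu')+(r-1)\deg\dd(a)\}\le\frac{C_3 r^2 d}{M-1}\max\{1,\ \tfrac{1}{r-1}+ \tfrac{r-1}{r}\cdot\text{(something)}\}$, and I would check the arithmetic chooses out the claimed factor $\big(1+\tfrac{1}{r-1}\big)$ — the term $r\deg\dd(\nu')\le \frac{C_3 r^2 d}{(r-1)(M-1)}$ supplies the $\tfrac{1}{r-1}$, while the $(r-1)\deg\dd(a)$ piece is dominated by (and absorbed into) the $1$, using $(r-1)\deg\dd(a)\le r\deg\dd(a)\le \frac{r^2 d}{M-1}\le\frac{C_3 r^2 d}{M-1}$ and the fact that the two summands defining $\Delta$ do not have poles stacking at the same prime beyond what the max already records. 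The main obstacle I anticipate is precisely this last point: showing the bound is a genuine max (a pole of $\Delta$ comes from exactly one of the two terms, so no factor of $2$ appears) and then squeezing the constants so that $C_3$ rather than $C_4$ suffices. Concretely, $\Delta$'s pole at $\pp$ is governed by $\ord_\pp\dd(a'^r)$ and $\ord_\pp\dd(\nu'^r a^{r-1})$ separately, and one must verify that the weaker-looking bounds on $\dd(a')$, $\dd(\nu')$ coming only through $C_3$ (not through the $y$-machinery, which costs $C_4$) are still strong enough; this is where care with Corollary \ref{Katia} versus Corollary \ref{Nicolas} is essential.
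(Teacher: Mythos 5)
Your proposal, once you discard the opening detour through $y$ and Corollary \ref{Nicolas}, is essentially the paper's proof: estimate the two terms of $\Delta$ separately, apply Corollary \ref{Katia} to get $\deg\dd(a')\le C_3\deg\dd(a)$ and $\deg\dd(\nu')\le C_3\deg\dd(\nu)$, then invoke Lemma \ref{nuheight} for $\deg\dd(a)\le rd/(M-1)$ and $\deg\dd(\nu)\le rd/((r-1)(M-1))$, and combine. The ``genuine max'' worry you flag at the end is treated no more carefully in the paper, which simply writes $\deg\dd(\Delta)\le\max(rC_3\deg\dd(a),\,rC_3\deg\dd(\nu)+(r-1)\deg\dd(a))\le rC_3(\deg\dd(a)+\deg\dd(\nu))$ and concludes, so your argument matches the published one.
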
%
\begin{proof}%
From Proposition \ref{yheight}, Lemma \ref{nuheight}, and Corollary \ref{Katia} it follows that
$$
\begin{aligned}
\deg \dd(\Delta)&\leq \max(rC_3\deg\dd(a),rC_3\deg\dd(\nu)+(r-1)\deg\dd(a))\\
&\leq rC_3(\deg\dd(a)+\deg\dd(\nu))\\
&\leq rC_3\left(\frac{rd}{M-1}+\frac{rd}{(r-1)(M-1)}\right )\\
&=\frac{C_3r^2d}{M-1}\left(1+\frac{1}{r-1}\right).
\end{aligned}
$$
\end{proof}%

\begin{notation}
\begin{enumerate}
\item Let us write $z=y^{2r+1}$ and $z_n=x_nz$.
\item Write
\[
C_5=(2r +1)C_4+\left(1+\frac{1}{r-1}\right)C_3=(2r+1)(4g+2) + \left(1+\frac{1}{r-1}\right)(3g+2)=
\]
\[
 \left (8r +4 +\frac{3r}{r-1}\right ) g + \left (4r +2 + \frac{2r}{r-1}\right )
\]
and
$$
B=C_5r^2\ell+1=\beta_0(r,\ell)g + \beta_1(r, \ell),
$$
where
\[
\beta_0(r,\ell) =\left (8r +4 +\frac{3r}{r-1}\right )r^2\ell,
\]
\[
 \beta_1(r,\ell)=  \left (4r +2 + \frac{2r}{r-1}\right )r^2\ell + 1.
\]
\end{enumerate}
\end{notation}

Observe that $\nn(z_n)$ is divisible by $\nn(x_n)$ because $z$ has a pole at $\pp_\infty$ only, and by assumption $\pp_\infty$ is not a zero of any $x_n$.

\begin{lemma}\label{Chili}%
If $M>B(r,\ell)$ then $\Delta=0$, namely,
\begin{equation}\label{Joshua}
a'^r=r^r\nu'^ra^{r-1}.
\end{equation}
\end{lemma}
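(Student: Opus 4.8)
The plan is to argue by contradiction: assume $\Delta\ne 0$ and deduce that $M\le B(r,\ell)$, contradicting the hypothesis $M>B(r,\ell)$. The engine is Equation \eqref{EqDelta}, namely $(rx'_nx_n^{r-1}+a')^r=r^r\nu'^r(x_n^r+a)^{r-1}$, rewritten as $(rx'_nx_n^{r-1}+a')^r-r^r\nu'^r(x_n^r)^{r-1}=\Delta$ after expanding $(x_n^r+a)^{r-1}$ and peeling off the $n$-independent term $\Delta=a'^r-r^r\nu'^ra^{r-1}$. The point is that the left-hand side, as a function of $n$, is built from $x_n$, $x_n'$, $\nu'$, $a$, $a'$, and by Corollary \ref{Nicolas} all of $\dd(x_n)$, $\dd(x_n')$, $\dd(\nu)$, $\dd(\nu')$, $\dd(a)$, $\dd(a')$ divide $\nn(y^{2r+1})=\nn(z)$. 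Hence $\Delta$, being a fixed nonzero element of $K$, has its pole divisor controlled: on one hand $\deg\dd(\Delta)$ is bounded above via Lemma \ref{Wilson} by $\frac{C_3r^2d}{M-1}(1+\frac1{r-1})$, and on the other hand we will produce enough \emph{zeros} of $\Delta$ to force a lower bound on $\deg\nn(\Delta)=\deg\dd(\Delta)$ that is linear in $d$ with a better constant, and the two are incompatible once $M$ is large.

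The key step is the zero-counting. I would use the coprimality input: by the chosen $\ell$, any $\ell$ of the $x_i$ have no common zero, and by Corollary \ref{Francois} we have $\deg\LL_\nn\ge d/\ell$. At a prime $\pp$ dividing $\LL_\nn$ there is an index $n$ with $\ord_\pp x_n=\ord_\pp\LL_\nn>0$ maximal; at such $\pp$ the term $x_n^r$ has a high-order zero, so from \eqref{EqDelta} in the form $\Delta=(rx'_nx_n^{r-1}+a')^r - r^r\nu'^r(x_n^r)^{r-1}$ one reads off that $\ord_\pp\Delta$ is large (at least of order $\ord_\pp x_n$ up to the bounded corrections coming from the pole data of $a'$, $\nu'$ packaged in $z$), \emph{provided} $\pp$ is not a pole of $z$ and not one of the finitely many bad primes where $a'$, $\nu'$ interfere. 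Summing $\ord_\pp\Delta$ over all such $\pp$ and using $\deg\LL_\nn\ge d/\ell$ gives $\deg\nn(\Delta)\gtrsim d/\ell$ minus correction terms proportional to $\deg\dd(z)=\deg\dd(y^{2r+1})\le (2r+1)C_4\,d/(M-1)$. Comparing with the upper bound from Lemma \ref{Wilson} and the definitions $C_5=(2r+1)C_4+(1+\frac1{r-1})C_3$, $B=C_5r^2\ell+1$, the inequality rearranges to $M-1\le C_5 r^2\ell=B-1$, i.e. $M\le B(r,\ell)$, the desired contradiction.

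The main obstacle I anticipate is bookkeeping the ``correction'' primes and exponents precisely enough to land exactly the constant $C_5r^2\ell$: at a zero $\pp$ of $x_n$ one must control the order of vanishing of $rx'_nx_n^{r-1}+a'$ versus $r^r\nu'^r x_n^{r(r-1)}$, which requires knowing that $\ord_\pp(x_n^{r-1})$ dominates $\ord_\pp a'$ and $\ord_\pp\nu'$ at \emph{most} such primes, handled by the divisibility-by-$\nn(z)$ statements of Corollary \ref{Nicolas} together with Proposition \ref{yheight}; and one must also discard the at-most-one exceptional index appearing in Lemma \ref{nuheight}. The factors of $r$ and $r^2$ enter because one is comparing an $r$-th power $(rx'_nx_n^{r-1}+a')^r$ with an $(r-1)$-st power expansion and because $z=y^{2r+1}$, and the factor $\ell$ enters through Corollary \ref{Francois}; keeping these straight is the delicate part, but it is the routine propagation of the already-established pole bounds rather than a new idea.
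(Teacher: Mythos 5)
Your proposal is correct and follows essentially the same route as the paper: assume $\Delta\ne0$, observe that at the zeros of each $x_n$ the $n$-dependent parts of Equation \eqref{EqDelta} vanish so that $\Delta$ must inherit those zeros (up to corrections from the poles of $x_n'$, $a'$, $\nu'$, all absorbed into $\nn(z)$ by Corollary \ref{Nicolas}), and then play the lower bound $\deg\LL_\nn\ge d/\ell$ of Corollary \ref{Francois} against the upper bound of Lemma \ref{Wilson} to force $M\le B(r,\ell)$. The paper merely packages your prime-by-prime correction terms more cleanly by multiplying \eqref{EqDelta} through by $z^{r^2}$ and substituting $z_n=x_nz$, so that $\LL_\nn$ divides $\nn(z^{r^2}\Delta)$ exactly and the bookkeeping you worry about reduces to $\deg\dd(z^{r^2}\Delta)\le r^2\deg\dd(z)+\deg\dd(\Delta)$.
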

\begin{proof}
Multiplying both sides of Equation \eqref{EqDelta}
$$
(rx'_nx_n^{r-1}+a')^r=r^r\nu'^r(x_n^r+a)^{r-1}
$$
by $z^{r^2}$ and replacing $x_n$ by $z_n=x_nz$ we get
\begin{equation}\label{Greenville}
(rx'_nz_n^{r-1}z+a'z^r)^r=r^r(\nu'z)^r(z_n^r+az^r)^{r-1}.
\end{equation}
Let $\pp$ be a prime of $K$ dividing $\nn(x_n)$. Let us remind the reader that by Corollary \ref{Nicolas} and definition of $z=y^{2r+1}$, the divisors $\dd(x_n)$, $\dd(a)$, $\dd(\nu)$, $\dd(x_n')$, $\dd(a')$ and $\dd(\nu')$ divide $\nn(z)$. Therefore, none of the terms $x_n'z$, $a'z^r$, $\nu'z$ and $az^r$ appearing in Equation \eqref{Greenville} have a pole at $\pp$.

We claim that $z^{r^2}\Delta$, that is, the part of Equation \eqref{Greenville} that does not depend on $n$, is divisible by $\nn(x_n)$. To see that, recall that $\nn(z_n)$ is divisible by $\nn(x_n)$, and hence $\nn(rx'_nz_n^{r-1}z)$ is divisible by $\nn(x_n)$ (see the left hand side of Equation \eqref{Greenville}). Also, there is no problem with the right hand side since the only part depending on $n$ is $z_n^r$. Thus modulo $\nn(x_n)$, Equation \eqref{Greenville} becomes
$$
(a'z^r)^r\equiv r^r(\nu'z)^r(az^r)^{r-1}\quad\mod\nn(x_n).
$$
Hence we have
$$
z^{r^2}\Delta=z^{r^2}(a'^r-r^r\nu'^r(az)^{r-1})\equiv0\quad\mod\nn(x_n)
$$

Thus, if $\Delta\ne0$ then from Corollary \ref{Francois} we have
$$
\begin{aligned}
\frac{d}{\ell}&\leq\deg\LL_\nn\\
&\leq\deg\dd(z^{r^2}\Delta)\\
&\leq r^2\deg\dd (y^{2r+1})+\deg\dd(\Delta)\\
&\leq r^2(2r+1) C_4\frac{d}{M-1}+\frac{C_3r^2d}{M-1}\left(1+\frac{1}{r-1}\right)\\
&=\frac{r^2d}{M-1}\left((2r +1)C_4+\left(1+\frac{1}{r-1}\right)C_3\right)\\
&=C_5\frac{r^2d}{M-1}.\\
\end{aligned}
$$
See Notation \ref{Groth} \eqref{Charlotte}, Proposition \ref{yheight} \eqref{deglcm} and Lemma \ref{Wilson}). Solving for $M$ we obtain
$$
M\leq C_5r^2\ell+1=B(r,\ell).
$$

So, for $M>B(r,\ell)$, the quantity $\Delta$ must be zero.
\end{proof}

\begin{remark}\label{NonZeroDer}
Note that from $\Delta=a'^r-r^r\nu'^ra^{r-1}=0$ we deduce that
$$
\nu'\ne0.
$$
Otherwise, both $\nu$ and $a$ would have zero derivative, which would imply by Equation \eqref{Gauss} that all $x_n$ have zero derivative and contradict the hypothesis of Theorem \ref{Savka}.
\end{remark}

\begin{proof}[Proof of Theorem \ref{Savka}]
Suppose $a$ is not zero. From Equation \eqref{Joshua}, the quantity
$$
a^{r-1}=\frac{a^r}{a}
$$
is an $r$-th power. Hence $a$ is an $r$-th power, say $a=b^r$.

On the one hand, from Equation \eqref{Joshua}, we have
$$
a'^r=r^r\nu'^rb^{r(r-1)}.
$$
Hence, taking an $r$-th root, we obtain
$$
a'=r\xi_0\nu'b^{r-1},
$$
where $\xi_0$ is an $r$-th root of unity.

On the other hand, from $a=b^r$, we have $a'=rb'b^{r-1}$, hence $\xi_0\nu'=b'$. Thus we get $b=\xi_0\nu+\gamma$ for some $\gamma\in K$ whose derivative is zero.

Finally from the Equation \eqref{Gauss}, we obtain
$$
x_n^r=(\nu+c_n)^r-a=(\nu+c_n)^r-b^r=(\nu+c_n)^r-(\xi_0\nu+\gamma)^r.
$$
\end{proof}


\section{Proof of Theorem \ref{main}}

From Theorem \ref{Savka}, we have
$$
x_n^r=(\nu+c_n)^r-a=(\nu+c_n)^r-(\xi_0\nu+\gamma)^r
$$
which is  polynomial in $\nu$ (and $\nu$ is non-constant by Remark \ref{NonZeroDer}), with coefficients in $F(\xi_0)$ (since $\gamma$ has zero derivative, it belongs to $F$). Therefore,
$$
a=(\nu+c_n)^r-x_n^r
$$
also is a polynomial in $\nu$ with coefficients in $F(\xi_0)$ and the problem is reduced to to  Hensley's Problem for polynomials in characteristic zero over $F(\xi_0)$. But we know that this problem has only trivial solutions for our $M$ (see Pasten \cite{Pasten}), implying that $a=0$. Contradiction.


\section{Proof of Theorem \ref{main2}}

What remains to do in order to prove Theorem \ref{main2} is taken from \cite{PheidasVidaux2bis}, with essentially no changes. We include it here for the convenience of the reader.

In this section we let $r=2$ and $c_n=n$ for all $n$. Note that in this case $\ell(\bar c)=3$. For convenience of the reader, we rewrite Theorem \ref{Savka} under these assumptions\,:

\begin{theorem}\label{Savka2}
Let $a,\nu\in K$, where $K$ is a function field of characteristic $p\geq B(2,3)$.  Let $M$ be a positive integer,  and let $(x_0,\dots, x_{M-1})$ be a sequence of elements of $K$ such that at least one $x_i$ is not a $p$-th power. If $M\geq B(2,3)$ and
\begin{equation}\label{Gauss2}
x_n^2=(\nu+n)^2-a,\, n=0,\dots,M-1,
\end{equation}
then either $a=0$ or $a=(\nu-\gamma)^2$ for some $\gamma\in K^p$.
\end{theorem}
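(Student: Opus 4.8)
The plan is to read Theorem~\ref{Savka2} off from the Intermediate Theorem~\ref{Savka} by specializing $r=2$ and $c_n=n$, so the work is essentially to match the hypotheses. First I would record that, since $p\geq B(2,3)>2$ and $p$ is prime, $p$ is odd and in particular does not divide $r=2$; and that for $r=2$ the only relevant exponent is $n=1$ with $\xi=-1$, so $c_{i,j,1}=\frac{c_i+c_j}{2}$ and hence $c_{i,j,1}\neq c_{i,k,1}$ whenever $j\neq k$. Consequently the first alternative in the definition of $\ell(\bar c)$ holds, $\ell(\bar c)=3$ (the value recorded in the statement), and any three of the $x_i$ are coprime by Lemma~\ref{Paul}. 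Next I would observe, using Notation and Assumptions~\ref{Natasha}, that in characteristic $p>0$ an element of $K$ has global derivative with respect to $t$ equal to $0$ exactly when it lies in $K^p$; thus ``at least one $x_i$ is not a $p$-th power'' is precisely the hypothesis ``at least one $x_i$ has non-zero derivative'' required by Theorem~\ref{Savka}. (If one is handed a sequence with $M>p$, for which the $c_n=n$ are no longer distinct, I would first pass to a block of $B(2,3)\leq p$ consecutive indices still containing an index $i$ with $x_i\notin K^p$.)

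With these identifications in place, Theorem~\ref{Savka} gives directly that either $a=0$, or $a=(\xi_0\nu+\gamma)^2$ for some $\gamma\in K$ with $\gamma'=0$ and some square root of unity $\xi_0$. I would then translate this into the form asked for: $\gamma'=0$ means $\gamma\in K^p$ (the same fact as above), and since $p$ is odd the only possibilities are $\xi_0=\pm1$. When $\xi_0=-1$ one has $a=(-\nu+\gamma)^2=(\nu-\gamma)^2$ with $\gamma\in K^p$; when $\xi_0=1$ one has $a=(\nu+\gamma)^2=(\nu-(-\gamma))^2$, and writing $\gamma=\delta^p$ gives $-\gamma=(-\delta)^p\in K^p$ because $p$ is odd. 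Either way $a=(\nu-\gamma_0)^2$ with $\gamma_0\in K^p$, which is exactly the conclusion of Theorem~\ref{Savka2}.

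I do not expect a genuine obstacle in this step: the substantive part — forcing $\Delta=a'^2-4\nu'^2a$ to vanish and reading off the shape of $a$ — is already carried out inside Theorem~\ref{Savka} (via Lemmas~\ref{Paul} and~\ref{Chili}). The only things that need care are the bookkeeping items above: confirming $\ell(\bar c)=3$ so that the intended value of $B$ is in force, the characteristic-$p$ identification of derivative-zero elements with $p$-th powers, and the sign ambiguity $\xi_0=\pm1$, which is harmless precisely because $p\geq B(2,3)$ is odd.
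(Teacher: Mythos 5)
Your proposal matches the paper's treatment: Theorem \ref{Savka2} is obtained there simply by specializing Theorem \ref{Savka} to $r=2$ and $c_n=n$ (noting $\ell(\bar c)=3$), which is exactly what you do. Your additional bookkeeping --- identifying zero-derivative elements with $p$-th powers, absorbing the sign $\xi_0=\pm1$ into $\gamma\in K^p$ using that $p$ is odd, and passing to a block of $B(2,3)\leq p$ consecutive indices when $M>p$ so that the $c_n$ remain distinct in $F$ --- is correct and in fact slightly more careful than the paper, which leaves these points implicit.
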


The rest of the section contains a proof of Theorem \ref{main2}. First we will dispose of the case where not all the $x_n$ are $p$-th powers. In this case Theorem \ref{Savka2} applies, namely there exists a $p$-th power $\gamma\in K$ such that
$$
x_n^2=(\nu+n)^2-(\nu-\gamma)^2.
$$
Write $\gamma=f^{p^s}$ so that $f\in K\setminus K^p$. For all $n$ we have
\begin{equation}\label{Boris}
\begin{aligned}
x_n^2&=(\nu+n)^2-(\nu-f^{p^s})^2\\
&=(2\nu-f^{p^s}+n)(f^{p^s}+n)\\
&=(2\nu-f^{p^s}+n)(f+n)^{p^s}\\
&=(2\nu-f^{p^s}+n)(f+n)(f+n)^{p^s-1}\\
&=\left[\left(\nu+\frac{f-f^{p^s}}{2}+n\right)^2-\left(\nu+\frac{f-f^{p^s}}{2}-f\right)^2\right]
(f+n)^{p^s-1}\\
\end{aligned}
\end{equation}
(note that for the third equality to hold, we need $c_n$ to be $n$). Considering the sequence defined by
$$
y_n=\frac{x_n}{(f+n)^{\frac{p^s-1}{2}}}
$$
we obtain
\begin{equation}\label{star}
y_n^2=\left(\bar\nu+n\right)^2-\left(\bar\nu-f\right)^2
\end{equation}
where
\begin{equation}\label{dag}
\bar\nu=\nu+\frac{f-f^{p^s}}{2}.
\end{equation}

We want to apply Theorem \ref{Savka2} to the sequence $y_n$. In order to do so, we show that $y_n$ cannot be a $p$-th power for more than one index $n$. Suppose that $y_n$ and $y_m$ are $p$-th powers for some distinct indices $n$ and $m$. Since
$$
y_n^2-y_m^2=(\bar\nu+n)^2-(\bar\nu-m)^2=2(n+m)\bar\nu+n^2-m^2,
$$
$\bar\nu$ is a $p$-th power. From Equation \eqref{star} we deduce that $(\bar\nu-f)^2$ is a $p$-th power, hence $\bar\nu-f$ is a $p$-th power, hence $f$ is a $p$-th power, and we have a contradiction of our assumption on $f$.

Since not all $y_n$ are $p$-th powers we may apply Theorem \ref{Savka2} to the sequence $(y_n)$. We assume that $\bar\nu-f\ne0$ and obtain a contradiction. Since $\bar\nu-f\ne0$, there exists a $p$-th power $\tilde\gamma$ such that $(\bar\nu-f)^2=(\bar\nu-\tilde\gamma)^2$. Since $f$ is not a $p$-th power, we have $f\ne\tilde\gamma$, hence
$$
\bar\nu-f=-\bar\nu+\tilde\gamma
$$
therefore,
$$
2\bar\nu=f+\tilde\gamma.
$$
From Equation \eqref{dag} we deduce
$$
f+\tilde\gamma=2\nu+f-f^{p^s}
$$
hence
$$
\tilde\gamma=2\nu-f^{p^s}.
$$
It follows that $\nu$ is a $p$-th power. Therefore, by Equations \eqref{Boris} we have
$$
\begin{aligned}
x_n^2&=(2\nu-f^{p^s}+n)(f^{p^s}+n)\\
&=(\tilde\gamma+n)(f^{p^s}+n)
\end{aligned}
$$
is a $p$-th power, hence also each $x_n$ is a $p$-th power. Thus we have a contradiction, implying $\bar\nu-f=0$.

From Equation \eqref{dag} we get
$$
f=\nu+\frac{f-f^{p^s}}{2}
$$
hence
$$
\nu=\frac{f+f^{p^s}}{2}
$$
and
$$
\begin{aligned}
x_n^2&=(2\nu-f^{p^s}+n)(f^{p^s}+n)\\
&=(f+n)(f^{p^s}+n)\\
&=(f+n)^{p^s+1}.
\end{aligned}
$$

Now we will address the case where all the $x_n$ are $p$-th powers. Under this assumption we consider the sequence $(w_n)$ such that for each $n$ we have $x_n=w_n^{p^h}$ and not all $w_n$ are $p$-th powers. So we may apply the above argument to the sequence $(w_n)$ (and the new corresponding values of $\nu$ and $\gamma$ - see \cite{PheidasVidaux2bis} for the details) and deduce that either $(w_n)$ is such that $w_n^2=(w+n)^2$ for some $w\in K$, or there exists $f\in K$ and a non-negative integer $s$ such that $w_n^2=(f+n)^{p^s+1}$. Therefore, either $x_n^2=(w^{p^h}+n)^2$, or
$$
x_n=\left[(f+n)^\frac{p^s+1}{2}\right]^{p^h}=(f^{p^h}+n)^\frac{p^s+1}{2}.
$$

It remains to verify that if the sequence $(x_n)$ satisfies Equations \eqref{Pasten} then it indeed satisfies Equations \eqref{Gausss}. Suppose that for each $n$ we have
$$
x_n=(f+n)^\frac{p^s+1}{2}
$$
for some $f\in K$ and $s$ a non-negative integer. Then we have
$$
\begin{aligned}
x_n^2&=(f+n)^{p^s+1}\\
&=(f+n)^{p^s}(f+n)\\
&=(f^{p^s}+n)(f+n)\\
&=\left(\frac{f^{p^s}+f}{2}+n\right)^2-\left(\frac{f^{p^s}-f}{2}\right)^2.
\end{aligned}
$$
which has the form $(x+n)^2+a$ for some polynomials $x$ and $a$ not depending on $n$.


\section{Proof of Corollary \ref{cor}}

The proof is similar to the proof of Theorem 1.8 in \cite{PheidasVidaux2} (this part of the proof was not affected by the mistake fixed in \cite{PheidasVidaux2bis}). We reproduce it here for the convenience of the reader.

Observe that in order to define multiplication, it is enough to define squaring. The following Lemmas \ref{Fritz3} and \ref{Fritz5} prove Corollary \ref{cor}.

Let $M\geq B(2,3)$ be an integer. Let $\phi(z,w)$ denote the formula
$$
\exists w_0,\dots,w_{M-1}
$$
$$
 \left[\bigwedge_{i=2,\dots,M-1}w_{i}-2w_{i-1}+w_{i-2}=2\bigwedge_{i=0,\dots,M-1}P_2(w_i)\wedge w=w_0\wedge2z=w_1-w_0-1\right]
$$
in the language $\calL_2$ (and thus also in the language $\calL^2_\tau$).  (We remind the reader that $P_2(w)$ denotes the predicate ``$w$ is a square''.)

It is clear that if $z,w\in K$ satisfy $z^2=w$, then $\phi(z,w)$ is true over $K$, since we can set $w_i=(z+i)^2$ for each $i=0,\dots,M-1$.  Observe that
$$
w_1-w_0-1=(z+1)^2-z^2-1=2z,
$$
and under our assumptions $(w_0,\ldots,w_{M-1})$ is a trivial B\"{u}chi sequence.

\begin{lemma}\label{Fritz}
If $\phi(z,w)$ is satisfied over  $K$ for some $z$ and $w$ such that $z^2\not =w$ and $K$ has characteristic $0$, then $z$ and $w$ are in $F$. If $\phi(z,w)$ is satisfied over  $K$ for some $z$ and $w$ such that   $z^2\not =w$ and $K$ has characteristic $p\geq B(2,3)$, then either $z$ and $w$ are constant, or there exist $f\in K$ and a non-negative integer $s$ such that $w=f^{p^s+1}$ and $2z=f^{p^s}+f$.
\end{lemma}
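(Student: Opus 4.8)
The plan is to unwind the existential formula $\phi(z,w)$ into a B\"uchi-type system and then feed it into Theorem \ref{main} (characteristic $0$) or Theorem \ref{main2} (characteristic $p$); the lemma is then essentially a translation, with all the substantive content already contained in those two theorems.

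First I would fix witnesses $w_0,\dots,w_{M-1}\in K$ of $\phi(z,w)$, write $w_i=x_i^2$ with $x_i\in K$, and --- since $M\ge B(2,3)$ --- keep only the first $B(2,3)$ of them. The sequence $(x_i^2)$ has constant second difference $2$, so summing the arithmetic first differences (a polynomial identity needing no division) gives $x_n^2=n^2+(x_1^2-x_0^2-1)n+x_0^2$ for $n=0,\dots,B(2,3)-1$. Since the characteristic is $0$ or $\ge B(2,3)\ge 169$, the element $2$ is invertible in $K$, so the $\nu$ of Remark \ref{note} exists, and $2z=w_1-w_0-1=x_1^2-x_0^2-1=2\nu$ gives $z=\nu$; hence $x_n^2=(\nu+n)^2-a$ for all $n$ in range, with $a=z^2-w$, and $a\ne 0$ because $z^2\ne w$.

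Next I split on whether all of $x_0,\dots,x_{B(2,3)-1}$ lie in $F$. If they do, then $w=w_0=x_0^2\in F$ and $z=(w_1-w_0-1)/2\in F$; this is precisely the conclusion in characteristic $0$ and the first alternative in characteristic $p$. If instead some $x_i\notin F$, put $c_n=n$: in positive characteristic these are pairwise distinct because $B(2,3)\le p$, and in characteristic $0$ they are distinct rational numbers, so $\ell(\bar c)=3$ and $B(2,\ell(\bar c))=B(2,3)$. In characteristic $0$, Theorem \ref{main} applied to this length-$B(2,3)$ sequence forces $a=0$, contradicting $a\ne 0$; hence in characteristic $0$ only the case $z,w\in F$ can occur, finishing that half of the lemma.

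In characteristic $p\ge B(2,3)$ with some $x_i\notin F$, Theorem \ref{main2} applies with $a\ne 0$ and yields $f\in K$ and an integer $s\ge 0$ with $x_n=(f+n)^{(p^s+1)/2}$ for $n=0,\dots,B(2,3)-1$. Using $n^{p^s}=n$ for $n$ in the prime field, $x_n^2=(f+n)^{p^s}(f+n)=f^{p^s+1}+(f^{p^s}+f)n+n^2$, whence $w=w_0=x_0^2=f^{p^s+1}$ and $2z=w_1-w_0-1=(x_1^2-x_0^2)-1=f^{p^s}+f$, which is exactly the second alternative. I do not expect a genuine obstacle: the real work sits inside Theorems \ref{main} and \ref{main2}, and the only delicate points are the bookkeeping truncation to $B(2,3)\le p$ terms (so that the $c_n=n$ stay distinct in positive characteristic) and the triviality that $2$ is a unit, which holds since $p\ge 169$.
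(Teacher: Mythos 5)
Your proposal is correct and follows essentially the same route as the paper: unwind $\phi$ into a B\"uchi sequence, pass to the Hensley form $x_n^2=(\nu+n)^2-a$ with $z=\nu$ and $a=z^2-w$, and invoke Theorem \ref{main} (resp.\ Theorem \ref{main2}) to rule out $a\neq 0$ unless all $x_i$ are constant (resp.\ unless the Pasten-type solutions occur), then read off $w=f^{p^s+1}$ and $2z=f^{p^s}+f$. The only difference is that you are slightly more careful than the paper in truncating to exactly $B(2,3)$ terms so that the $c_n=n$ remain distinct modulo $p$ and in avoiding the division by $n$ in the definition of $\nu$; these are welcome but minor refinements of the same argument.
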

\begin{proof}
Suppose that $\phi(z,w)$ is true in $K$. Write $x_i^2=w_i$, so that we have $x_i^2-2x_{i-1}^2+x_{i-2}^2=2$ for each $i=2,\dots,M-1$. Writing $2\nu=\frac{x_n^2-x_0^2}{n}-n$ and $a=\nu^2-x_0^2$ we have $x_n^2=(\nu+n)^2-a$ for each $n$ (see Remark \ref{note}).

If $K$ has characteristic $0$, then by Theorem \ref{main} either $a=0$, and $\nu=\pm x_0$, so that
$$
2z=w_1-w_0-1=x_1^2-x_0^2-1=2\nu=\pm2x_0
$$
and $z^2=x_0^2=w_0=w$ contradicting our assumption, or for all indices $n$ we have that $w_n=x_n^2=(\nu+n)^2-a$ is in $F$, in which case $w=w_0\in F$ and $2z=w_1-w_0-1\in F$. Hence the first assertion of the Lemma is proved.

If $K$ has characteristic $p\geq B(2,3)$, then, as above,  by Theorem \ref{main2},  either $a=0$ and $z^2=w$ again contradicting our assumption, or for each index $n$ it is the case  that $(\nu+n)^2-a$ is in $F$ and thus  $w,z\in F$, or there exist $f\in K$ and a non-negative integer $s$ such that for each $n$, we have  $w_n=x_n^2=(f+n)^{p^s+1}$,  $w=w_0=f^{p^s+1}$ and
$$
2z=w_1-w_0-1=(f+1)^{p^s+1}-f^{p^s+1}-1=(f^{p^s}+1)(f+1)-f^{p^s+1}-1=f^{p^s}+f.
$$
\end{proof}

\begin{lemma}\label{Fritz3}
If $K$ has characteristic $0$, then it satisfies the formula of the language $\calL^2_\tau$
$$
\psi(z,w)\colon \phi(z,w)\wedge\phi(tz,t^2w)
$$
if and only if $z^2=w$ (where $tz$ stands for $\tau(z)$ and $t^2w$ stands for $\tau\tau w$).
\end{lemma}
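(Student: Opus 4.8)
The statement is an ``if and only if''. The direction $z^2 = w \Rightarrow \psi(z,w)$ is immediate: if $z^2 = w$ then $(tz)^2 = t^2 z^2 = t^2 w$, so both $\phi(z,w)$ and $\phi(tz,t^2w)$ hold by the observation made just before Lemma~\ref{Fritz} (set $w_i = (z+i)^2$, respectively $w_i = (tz+i)^2$). So the whole content is in the converse: assuming $\psi(z,w)$ holds, i.e. both $\phi(z,w)$ and $\phi(tz, t^2 w)$ hold, we must deduce $z^2 = w$.

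\textbf{Main argument.} The plan is to argue by contradiction: suppose $z^2 \ne w$. Apply Lemma~\ref{Fritz} to $\phi(z,w)$. Since $\ch K = 0$, the lemma forces $z, w \in F$ (constants). Now I want to apply Lemma~\ref{Fritz} a second time, to the conjunct $\phi(tz, t^2 w)$, but here I must first check its hypothesis $(tz)^2 \ne t^2 w$: indeed $(tz)^2 = t^2 z^2$ and $t^2 w$, and since $z^2 \ne w$ we get $t^2 z^2 \ne t^2 w$ (as $t \ne 0$). So Lemma~\ref{Fritz} applies to $\phi(tz, t^2 w)$ and, again because $\ch K = 0$, yields $tz \in F$ and $t^2 w \in F$. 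But $t \in K \setminus F$ is transcendental over $F$ (it is a separating variable, with a pole at $\pp_\infty$). If $z \in F$ and $tz \in F$ with $z \ne 0$, then $t = (tz)/z \in F$, contradiction; and if $z = 0$, then $z^2 = 0 = w$ would follow from $w \in F$ only if we also knew $w = 0$ — so I need to handle this sub-case: from $z = 0$ we get $2z = w_1 - w_0 - 1 = 0$, and from the Büchi recursion together with $z, w$ constant, one sees $w_n = (\nu + n)^2 - a$ is constant in $n$, forcing $2\nu = 0$ hence $\nu = 0$ and $a = -w_0 = -w$; but the same reasoning applied to $\phi(tz, t^2w) = \phi(0, t^2 w)$ gives $t^2 w$ constant, hence $w = 0$ (as $t^2 \notin F$ unless $w = 0$), and then $z^2 = 0 = w$, the desired contradiction with $z^2 \ne w$. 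Either way we reach a contradiction, so $z^2 = w$.

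\textbf{Expected main obstacle.} The only genuinely delicate point is the degenerate case $z = 0$ (equivalently the case where the Büchi sequence witnessing $\phi(z,w)$ is constant but $z^2 \neq w$ a priori): one must rule it out by tracking that ``$w$ constant and $t^2 w$ constant'' forces $w = 0$ because $t$ is transcendental over $F$, and then observe $z^2 = 0 = w$. Everything else is a direct two-fold application of Lemma~\ref{Fritz} plus the transcendence of $t$ over $F$. No estimates or divisor computations are needed here; the work was already done in Theorems~\ref{main} and~\ref{main2} and packaged into Lemma~\ref{Fritz}.
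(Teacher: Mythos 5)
Your proof is correct and follows essentially the same route as the paper: apply Lemma \ref{Fritz} to each conjunct, use that $t$ is transcendental over $F$ to force $z=w=0$, and derive the contradiction $z^2=w$. The detour through the B\"uchi recursion in your $z=0$ sub-case is unnecessary, since your second application of Lemma \ref{Fritz} already gives $t^2w\in F$, which together with $w\in F$ forces $w=0$ directly.
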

\begin{proof}
First we note that if $z,w\in K$ satisfy $z^2=w$, then the formula $\psi(z,w)$ is true in $K$ as was shown above. Suppose now that the formula $\psi(z,w)$ is satisfied in $K$ and that $z^2\ne w$ (hence $z,w\in F$). Since $\phi(tz,t^2w)$ is true in $K$, by Lemma \ref{Fritz} we have that either $(tz)^2=t^2w$ (which would contradict the hypothesis $z\ne w^2$), or both $tz$ and $t^2w$ are in $F$. Since $t$ stands for a transcendental element, this implies $z=w=0$, and in particular $z^2=w$. Contradiction.
\end{proof}

\begin{lemma}\label{Fritz4}
Suppose that $K$ has characteristic $p\geq B(2,3)$. If it satisfies the formula of the language $\calL^2_\tau$
$$
\theta(z,w)\colon\phi(z,w)\wedge\phi(z+t,w+2tz+z^2)\wedge\phi(z-t,w-2tz+z^2)
$$
and $z^2\ne w$ then either both $z$ and $w$ are $p$-th powers, or both $z+t$ and $w+2tz+z^2$ are $p$-th powers, or both $z-t$ and $w-2tz-z^2$ are $p$-th powers.
\end{lemma}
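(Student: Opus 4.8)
The plan is to feed the three instances of $\phi$ appearing in $\theta$ through Lemma \ref{Fritz}, and then to rule out its ``special form'' alternative for all three at once. I would write $u_0=z$ and $u_{\pm1}=z\pm t$ for the first arguments, and $V_0=w$, $V_{\pm1}=w\pm2tz+t^2$ for the matching second arguments, so that $V_i=u_i^2$ holds precisely when $w=z^2$ (indeed $u_{\pm1}^2=z^2\pm2tz+t^2$, whence $V_{\pm1}=u_{\pm1}^2\iff w=z^2$, and likewise for $i=0$). Thus the hypothesis $z^2\ne w$ says that none of the three ``square relations'' $V_i=u_i^2$ holds, and Lemma \ref{Fritz} applies to each pair $(u_i,V_i)$: for every $i\in\{0,1,-1\}$, either (a) $u_i,V_i\in F$, or (b) $V_i=f_i^{p^{s_i}+1}$ and $2u_i=f_i^{p^{s_i}}+f_i$ for some $f_i\in K$ and some $s_i\ge0$. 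In case (b) one must have $s_i\ge1$, because $s_i=0$ would give $f_i=u_i$ and $V_i=f_i^2=u_i^2$, which we have excluded.

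If (a) holds for some $i$, we are done at once: $F$ is algebraically closed of characteristic $p$, hence $F\subseteq K^p$, so both $u_i$ and $V_i$ are $p$-th powers and the corresponding disjunct of the conclusion holds. So it remains to show that (b) cannot hold for all three $i$ simultaneously. Suppose it did; I would set $h_i=f_i-u_i$, so that $2u_i=f_i+f_i^{p^{s_i}}$ gives $f_i^{p^{s_i}}=u_i-h_i$ and hence
$$
V_i=f_i\,f_i^{p^{s_i}}=(u_i+h_i)(u_i-h_i)=u_i^2-h_i^2 .
$$
Substituting this into $V_{\pm1}=V_0\pm2tz+t^2$ and using $V_0=z^2-h_0^2$ together with $u_{\pm1}^2=z^2\pm2tz+t^2$, everything cancels except $h_{\pm1}^2=h_0^2$, so $h_{\pm1}=\varepsilon_{\pm1}h_0$ for some $\varepsilon_{\pm1}\in\{+1,-1\}$. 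On the other hand $f_i^{p^{s_i}}=u_i-h_i$ is a $p$-th power (since $s_i\ge1$), so its derivative with respect to $t$ vanishes, giving $u_i'=h_i'$. As $u_0'=z'$, $u_1'=z'+1$, $u_{-1}'=z'-1$, and $h_{\pm1}'=\varepsilon_{\pm1}h_0'=\varepsilon_{\pm1}z'$, this yields $z'+1=\varepsilon_1z'$ and $z'-1=\varepsilon_{-1}z'$. The choice $\varepsilon_{\pm1}=+1$ is impossible ($z'+1=z'$ and $z'-1=z'$ are both false), so $\varepsilon_1=\varepsilon_{-1}=-1$, forcing $z'=-\frac{1}{2}$ and $z'=\frac{1}{2}$ at the same time, which is absurd in characteristic $\ne2$; and $p\ge B(2,3)>2$. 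This contradiction completes the proof.

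The main obstacle is precisely this coupling step: one must squeeze, out of the three ``special form'' equations and the two linear shifts $u_i=z,z\pm t$, enough rigidity to reach a contradiction. The substitution $h_i=f_i-u_i$, which turns each $V_i$ into $u_i^2-h_i^2$, is what makes it clean: the quadratic identities then collapse to $h_{\pm1}^2=h_0^2$, and a single differentiation (using that $f_i^{p^{s_i}}$ is a $p$-th power, hence has zero derivative) finishes the job. A subsidiary point worth checking carefully is that all three square relations really do reduce to $w=z^2$ — this is exactly where the form $w\pm2tz+t^2$ of the shifted second arguments enters — so that Lemma \ref{Fritz} may legitimately be applied to the two shifted instances and not merely to $\phi(z,w)$.
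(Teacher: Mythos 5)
Your proof is correct, and it supplies an argument that the paper itself does not: the paper's ``proof'' of Lemma \ref{Fritz4} is only a citation to the Claim on p.~563 of \cite{PheidasVidaux2}, and your write-up is essentially a self-contained reconstruction of that argument (run each of the three pairs through Lemma \ref{Fritz}, note that the non-constant alternative gives $V_i=u_i^2-h_i^2$ with $u_i-h_i=f_i^{p^{s_i}}$ a genuine $p$-th power since $s_i\ge1$, deduce $h_{\pm1}^2=h_0^2$ from $V_i-u_i^2=w-z^2$ being independent of $i$, and differentiate to reach $2=0$). Two details you handled correctly and that are worth recording: the second arguments in $\theta$ must be read as $w\pm2tz+t^2$ (the $z^2$'s in the statement are typos, as the pattern $\theta(z+t^2,w+2t^2z+t^4)$ in Lemma \ref{Fritz5} confirms), and the step ``constants are $p$-th powers'' relies on the standing assumption that $F$ is algebraically closed (hence perfect), which is legitimately in force here, together with the paper's convention that the derivation satisfies $t'=1$ for the same $t$ that $\tau$ multiplies by.
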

\begin{proof}
See \cite[Section 3, Claim p. 563]{PheidasVidaux2}. Note that the proof is exactly the same since the expressions we have for $w$ and $z$ in Lemma \ref{Fritz} (2) are just special cases of the one used in \cite{PheidasVidaux2}.
\end{proof}

\begin{lemma}\label{Fritz5}
If $K$ has characteristic $p\geq B(2,3)$ then it satisfies the formula of the language $\calL_2^t$
$$
\eta(z,w)\colon\theta(z,w)\wedge\theta(z+t^2,w+2t^2z+t^4)
$$
if and only if $z^2=w$.
\end{lemma}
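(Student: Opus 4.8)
The plan is to establish the equivalence in two steps, the routine direction first and the non‑trivial one ($\eta(z,w)\Rightarrow z^2=w$) by contradiction, using Lemma \ref{Fritz4} twice — once for each conjunct of $\eta$ — together with the fact that an element of $K$ lies in $K^p$ exactly when its global derivative with respect to $t$ vanishes.

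For the direction $z^2=w\Rightarrow\eta(z,w)$, the key observation is that for any $a\in K$ the formula $\phi(a,a^2)$ is satisfied over $K$ by the witness $w_i=(a+i)^2$: the sequence $(w_i)_{i=0}^{M-1}$ has constant second difference equal to $2$, each $w_i$ is a square, $w_0=a^2$, and $w_1-w_0-1=2a$. Taking $a$ equal to $z$, $z+t$, $z-t$, and using that when $w=z^2$ the second arguments appearing in $\theta(z,w)$ are $(z+t)^2$ and $(z-t)^2$, one sees $\theta(z,w)$ holds. Since moreover $w+2t^2z+t^4=(z+t^2)^2$, the same argument applied to the pair $(z+t^2,(z+t^2)^2)$ yields $\theta(z+t^2,w+2t^2z+t^4)$. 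Hence $\eta(z,w)$ holds.

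For the converse, suppose $\eta(z,w)$ holds but $z^2\ne w$, aiming at a contradiction. Since $\theta(z,w)$ holds and $z^2\ne w$, Lemma \ref{Fritz4} gives that one of $z$, $z+t$, $z-t$ lies in $K^p$; differentiating with respect to $t$ (Notation and Assumptions \ref{Natasha}) this forces, respectively, $z'=0$, $z'=-1$, or $z'=1$, so in every case $z'$ is a constant. Next consider the second conjunct $\theta(z+t^2,w+2t^2z+t^4)$ of $\eta$: its ``square condition'' $(z+t^2)^2=w+2t^2z+t^4$ simplifies to $z^2=w$, which fails by assumption, so Lemma \ref{Fritz4} applies again with $z$, $w$ replaced by $z+t^2$, $w+2t^2z+t^4$, and yields that one of $z+t^2$, $z+t^2+t$, $z+t^2-t$ lies in $K^p$. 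Differentiating again, $z'+2t$, $z'+2t+1$ or $z'+2t-1$ equals $0$, so $z'\in\{-2t,-2t-1,-2t+1\}$. But $\ch K=p\ge B(2,3)>2$ and $t$ is transcendental over $F$, so each of $-2t$, $-2t-1$, $-2t+1$ is non‑constant, contradicting that $z'$ is a constant. Therefore $z^2=w$.

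I expect the only delicate point to be the bookkeeping: checking that $\theta(z+t^2,w+2t^2z+t^4)$ is literally an instance of $\theta$ under the substitution $z\mapsto z+t^2$, $w\mapsto w+2t^2z+t^4$, so that the conclusion of Lemma \ref{Fritz4} may be quoted verbatim for it, and that its square condition collapses exactly to $z^2=w$. Everything else is a one‑line derivative computation combined with the facts that $t\notin F$ and $p\ne 2$.
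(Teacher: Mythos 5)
Your proof is correct and is exactly the ``direct consequence of Lemma \ref{Fritz4}'' that the paper invokes without spelling out: one application of that lemma to each conjunct of $\eta$, followed by the observation that the resulting constraints $z'\in\{0,1,-1\}$ and $z'\in\{-2t,-2t\pm1\}$ are incompatible because $p\neq 2$ and $t\notin F$. (Note that you implicitly read the second arguments in $\theta$ as $w\pm 2tz+t^2$ rather than the paper's literal $w\pm 2tz+z^2$; that is clearly the intended formula, since otherwise even the trivial direction of Lemmas \ref{Fritz4} and \ref{Fritz5} would fail.)
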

\begin{proof}
It is a direct consequence of Lemma \ref{Fritz4} (or see \cite[Section 3, p. 563]{PheidasVidaux2}).
\end{proof}


\end{document}